\newtheorem{theorem}{\textbf{Theorem}}[section]
\newtheorem{lemma}{\textbf{Lemma}}[section]
\newtheorem{proposition}{\textbf{Proposition}}[section]
\newtheorem{corollary}{\textbf{Corollary}}[section]
\newtheorem{remark}{\textbf{Remark}}[section]
\newtheorem{definition}{\textbf{Definition}}[section]
\def\be{\begin{equation}}
\def\ee{\end{equation}}
\def\bea{\begin{eqnarray}}
\def\eea{\end{eqnarray}}
\def\bt{\begin{theorem}}
\def\et{\end{theorem}}
\def\bl{\begin{lemma}}
\def\el{\end{lemma}}
\def\br{\begin{remark}}
\def\er{\end{remark}}
\def\bp{\begin{proposition}}
\def\ep{\end{proposition}}
\def\bc{\begin{corollary}}
\def\ec{\end{corollary}}
\def\bd{\begin{definition}}
\def\ed{\end{definition}}
\def\la{\lambda}
\def\non{\nonumber }
\begin{document}

\title{On the General Ericksen--Leslie System: Parodi's Relation, Well-posedness and Stability}
\author{
{\sc Hao Wu} \footnote{School of Mathematical Sciences and Shanghai
Key Laboratory for Contemporary Applied Mathematics, Fudan
University, 200433 Shanghai, China, Email:
\textit{haowufd@yahoo.com}.}, {\sc Xiang Xu} \footnote{Department of
Mathematical Sciences, Carnegie Mellon University, Pittsburgh, PA
15213, Email: \textit{xuxiang@math.cmu.edu}.} \ and {\sc Chun Liu}
\footnote{Department of Mathematics, Penn State University, State
College, PA 16802, Email: \textit{liu@math.psu.edu}.} }

\date{\today}

\maketitle

\begin{abstract}
In this paper we investigate the role of Parodi's relation
in the well-posedness and stability of the general Ericksen--Leslie
system modeling nematic liquid crystal flows. First, we give a formal physical derivation of the
Ericksen--Leslie system through an appropriate energy variational approach under Parodi's relation, in
which we can distinguish the conservative/dissipative parts of the
induced elastic stress. Next, we prove global well-posedness and long-time
behavior of the Ericksen--Leslie system under the assumption that the viscosity $\mu_4$ is sufficiently large.
Finally, under Parodi's
relation, we show the global well-posedness and Lyapunov stability for the Ericksen--Leslie system near local energy minimizers.
The connection between Parodi's relation and linear stability of the Ericksen--Leslie system is also discussed.\smallskip

\noindent \textbf{Keywords}: Liquid crystal flows, Ericksen--Leslie System, Parodi's
relation, uniqueness of asymptotic limit, stability. \\
\textbf{AMS Subject Classification}: 35B40, 35B41, 35Q35, 76D05.
\end{abstract}

\section{Introduction}

Liquid crystal is often viewed as the fourth state of the matter
besides the gas, liquid and solid, or as an intermediate state
between liquid and solid. It possesses none or partial positional
order but displays an orientational order at the same time. The
nematic phase is the simplest among all liquid crystal phases and is
close to the liquid phase. The molecules float around as in a liquid
phase, but have the tendency of aligning along a preferred direction
due to their orientation. The hydrodynamic theory of liquid crystals
due to Ericken and Leslie was developed around 1960's
\cite{E61,E62,Le66,Le68}. Earlier attempts on rigorous mathematical
analysis of the Ericksen--Leslie system were made recently
\cite{LL01} (see \cite{Lin89,LL95,LL96} for a simplified system
which carried important mathematical difficulties of the original
Ericksen--Leslie system, except the kinematic transport of the
director field).

The full Ericksen--Leslie system consists of the
following equations (cf. \cite{E87, Le68, Le79,LL01}):
 \begin{eqnarray}
&& \rho_t+v\cdot\nabla\rho = 0, \label{the most primitive equ 1}\\
&& \rho \dot{v}=\rho F+\nabla \cdot \hat{\sigma}, \label{the most primitive equ 2} \\
&& \rho_1\dot{\omega} = \rho_1 G+\hat{g}+\nabla \cdot \pi. \label{the most primitive equ 3}
 \end{eqnarray}
 Equations \eqref{the most
primitive equ 1}--\eqref{the most primitive equ 3} represent the
conservation of mass, linear momentum, and angular momentum,
respectively, with the anisotropic feature of liquid crystal
materials exhibited in \eqref{the most primitive equ 3} and its
nonlinear coupling in \eqref{the most primitive equ 2} (cf.
\cite{Le68,LL01}). In this paper, we consider the flow of an
incompressible material, namely, $\nabla \cdot v=0$. Here, $\rho$ is
the fluid density, $\rho_1$ is a (positive) inertial constant,
 $v=(v_1, v_2, v_3)^{T}$ is the flow velocity, $d=(d_1, d_2, d_3)^{T}$ is the orientational
 order parameter representing the macroscopic average of the
molecular directors, $\hat{g}$ is the intrinsic force associated
with $d$, $\pi$ is the director stress, $F$ and $G$ are external
body force and external director body force, respectively. The
superposed dot denotes the material derivative $\partial_t + v\cdot
\nabla$. The notations
 \bea
 && A=\frac12(\nabla v+\nabla^{T}v),\
\quad\ \  \quad \quad \Omega=\frac12(\nabla v-\nabla^{T}v), \non\\
&& \omega=\dot{
d}=d_t+( v\cdot\nabla)d, \ \,\; \quad\quad  N=\omega-\Omega\,d,
\non
 \eea
 represent the
rate of strain tensor, skew-symmetric part of the strain rate, the
material derivative of $d$ (transport of center of mass) and rigid
rotation part of director changing rate by fluid vorticity,
respectively.

We have the following constitutive relations in the system \eqref{the most
primitive equ 1}--\eqref{the most primitive equ 3} for
$\hat{\sigma}$, $\pi$ and $\hat{g}$:
\begin{eqnarray}
\hat{\sigma}_{ij}&=&-P\delta_{ij}-\rho\frac{\partial W}{\partial
d_{k,i}}d_{k,j}+\sigma_{ij}, \label{v1}\\
\pi_{ij}&=&\beta_id_j+\rho\frac{\partial W}{\partial d_{j,
i}}, \\
\hat{g}_{i}&=&\gamma d_i-\beta_jd_{i,j}-\rho\frac{\partial
W}{\partial d_{i}}+g_{i}.
\end{eqnarray}
Here $P$ is a scalar function representing the pressure. The vector
$\beta=(\beta_1,\beta_2, \beta_3)^T$  and the scalar function
$\gamma$ (sometimes called director tension) are Lagrangian
multipliers for the constraint on the length of director such that
$|d|=1$, with the Oseen--Frank energy functional $W$ for the
equilibrium configuration of a unit director field: \bea
W&=&\frac{k_1}{2}(\nabla\cdot d)^2+\frac{k_2}{2}|d\times (\nabla
\times d)|^2+\frac{k_3}{2}|d\cdot(\nabla\times d)|^2\non\\
&& +(k_2+k_4)[{\rm
tr} (\nabla d)^2-(\nabla \cdot d)^2].\label{OF}
\eea
We note that the forth term in \eqref{OF}
 $${\rm tr} (\nabla d)^2-(\nabla \cdot d)^2=\nabla
\cdot[(\nabla d) d-(\nabla \cdot d)d]$$
  is a null Lagrangian, which  only
depends on the value of the trace of $d$ on the boundary (cf. \cite{AG97}).

The kinematic transport of the director $d$ (denoted by $g$) is given by:
\begin{equation}
g_{i}=\lambda_1N_{i}+\lambda_2d_{j}A_{ji}=\lambda_1\left(
N_i+\frac{\lambda_2}{\lambda_1}d_jA_{ji}\right),\label{kitr}
\end{equation}
which represents the effect of macroscopic flow field on the microscopic
structure. The material coefficients $\lambda_1$ and $\lambda_2$ reflects the
molecular shape (Jeffrey's orbit \cite{Je})  and the slippery between the fluid and
the particles (see discussions in Section \ref{EnVarA}). The first term of \eqref{kitr} represents
the rigid rotation of the molecule, while the second term stands for the
stretching of the molecule by the flow.

The stress tensor $\sigma$ has the following form:
\bea
{\sigma}_{ij}&=&\mu_1d_{k}A_{kp}d_{p}d_{i}d_{j}+\mu_2N_{i}d_{j}+\mu_3d_{i}N_{j}+
\mu_4A_{ij}\non\\
&& +\mu_5A_{ik}d_{k}d_{j}+\mu_6d_{i}A_{jk}d_{k}. \label{v5}
\eea
These (independent) coefficients $\mu_1,...,\mu_6$, which may depend
on material and temperature, are usually called Leslie coefficients. These
coefficients are related to certain local correlations in the fluid
(cf. \cite{dG}). For convenience, $\mu_i's$ are called viscous
coefficients in later sections.

In order to handle the higher-order nonlinearities due to the
nonlinear constraint $|d|=1$ (i.e., the Lagrangian multipliers
$\beta$, $\gamma$), one can introduce a penalty (or relaxation)
approximation of Ginzbug--Landau type, by adding one term
$$\mathcal{F}(d)=\frac{1}{4\varepsilon^2}(|d|^2-1 )^2$$ in $W$.
Physically this term can be attributed to the extensibility of the
molecules. After the discussions for each $\varepsilon>0$, we then
take the limit as $\varepsilon \rightarrow 0$. This method is
motivated by the work on the gradient flow of harmonic maps into the
sphere (see, e.g., \cite{CS89}), but whether the solution of the
 Ericksen--Leslie system with Ginzburg--Landau approximation converges
 to that of the original one with constraint $|d|=1$ as $\varepsilon$ tends
 to zero is still a challenging problem. Nevertheless, the reformulated system with
penalty approximation also has natural physical interpretations. It
is similar to what Leslie proposed in \cite{Le79} for the flow of an
anisotropic liquid with varying director length. Mathematically, it
can also be related to models   for nematic liquid crystals with
variable degree of orientation proposed by Ericksen in \cite{E87}
under  specific conditions.  In particular, $\{x: d(x, t) =0\}$
represents the transition region of isotropic fluids. We refer to
\cite{LL95} for more discussions.

For  simplicity, in this paper, we focus on the
relaxation form of the elastic energy associated with $d$:
 \be
W(d)=\frac12|\nabla d|^2+\frac{1}{4\varepsilon^2}(|d|^2-1)^2.
 \label{WWW}
 \ee
 It is obvious that this choice of $W$ corresponds to the elastically isotropic
situation, i.e., taking $k_1=k_2=k_3=1$, $k_4=0$ in \eqref{OF}. The corresponding problem
with general Oseen--Frank energy \eqref{OF} can be treated in a
similar way, but the argument is more involved. With the choice of
the penalized energy $W$, we can remove the Lagrangian multipliers
and set $\gamma=\beta_j=0$. Since the inertial constant $\rho_1$ is
usually very small (cf. \cite{C74a}), we take $\rho_1=0$. Moreover,
we assume that the density is constant (which in turn yields the
incompressibility $\nabla \cdot v=0$) and the external forces
vanish, i.e., $\rho=1,\  F=0, \ G=0$ (cf. \cite{LL01}). Note that in
the incompressible cases the assumption $F=0$ means that there is no
exterior nonconservative forces.

Thus, the full Ericksen--Leslie system \eqref{the most primitive equ 1}--\eqref{the most
primitive equ 3} can be reformulated to
 \begin{eqnarray}
 v_t+v\cdot\nabla v+\nabla P&=&-\nabla\cdot(\nabla d
\odot\nabla d) +\nabla\cdot\sigma,
 \label{e1}\\
\nabla\cdot v&=&0, \label{e2}\\
d_t+(v\cdot\nabla )d-\Omega
d+\frac{\lambda_2}{\lambda_1}Ad&=&-\frac{1}{\lambda_1}\left(\Delta
d-f(d)\right), \label{e3}
\end{eqnarray}
where $$f(d)= \mathcal{F}'(d)=\frac{1}{\varepsilon^2}(|d|^2-1)d$$ and
$\sigma$ is given by \eqref{v5}. We denote by $\nabla d\odot \nabla
d$  the $3\times 3$-matrix whose $(i,j)$-entry is
$\nabla_i d\cdot \nabla_j d$, $1\leq i,j\leq 3$. In the following
text, we just set $\varepsilon=1$ and our results indeed hold for
any arbitrary but fixed $\varepsilon>0$.
In this paper, we will focus on the bulk
properties of the Ericksen--Leslie system. For this, we consider the equations
\eqref{e1}--\eqref{e3} subject to periodic boundary conditions
(i.e., in torus $\mathbb{T}^3$):
 \be v(x+e_i,t) = v(x,t), \ \
d(x+e_i,t)=d(x,t), \ \ \mbox{for} \ (x,t) \in
\partial Q\times\mathbb{R}^+,
  \label{B.C. in nonzero case}
  \ee
and initial conditions
 \be v|_{t=0}=v_0(x), \ \ \mbox{with} \ \ \nabla\cdot
v_0=0, \ \ \ d|_{t=0}=d_0(x), \ \ \mbox{for} \ x \in Q,
 \label{I.C. in nonzero case}
 \ee
 where $Q$ is a unit square in $\mathbb{R}^3$.

Due to temperature dependence of the Leslie coefficients, there
exists different behavior between  various coefficients (cf.
\cite{dG}): $\mu_4$-which does not involve the alignment
properties-is a rather smooth function of temperature; but all the other
$\mu's$ describe couplings between the molecule orientation and the flow, and are
thus affected by a decrease in the nematic order $|d|$. In this
paper, we just look at the isothermal case where $\mu's$ are assumed to
be constants. The following relations are frequently introduced in the literature (cf. \cite{Le68,Le79})
\begin{eqnarray}
 &&\lambda_1=\mu_2-\mu_3, \ \ \ \lambda_2=\mu_5-\mu_6, \label{lama1}\\
&&\mu_2+\mu_3=\mu_6-\mu_5. \label{lam2}
\end{eqnarray}
Relations given in \eqref{lama1} are necessary conditions in order to satisfy the equation of
motion identically (cf. \cite[Section 6]{Le68}). \eqref{lam2} is
called \emph{Parodi's relation} (cf. \cite{P70}), which is derived
from Onsager reciprocal relations expressing the equality of certain
relations between flows and forces in thermodynamic systems out of
equilibrium (cf. \cite{O31}). Under the assumption of Parodi's
relation, we see that the dynamics of an incompressible nematic
liquid crystal flow involve five independent Leslie coefficients in
\eqref{v5}.

Since the mathematical structure of the Ericksen--Leslie system is
quite complicated, past existing work was almost all restricted to
its simplified versions (cf. \cite{LL95, LL96, LLZ07, LSY07, CR,
SL08}). As far as the general Ericksen--Leslie system is concerned,
there is few known result in analysis (cf. e.g., \cite{LL01}). In
\cite{LL01}, well-posedness of the general Ericksen--Leslie system
\eqref{e1}--\eqref{e3} subject to Dirichlet boundary conditions  was
proved under the special assumption $\lambda_2=0$, which imposed an
extra constraint on those Leslie coefficients. This physically
indicates that the stretching due to the flow field is neglected,
which is more feasible for small molecules. Mathematically this
assumption brings great convenience since a weak maximum principle
for $|d|$ holds (cf. \cite[Theorem 3.1]{LL01}). For the general
system \eqref{e1}--\eqref{e3}, the maximum principle for $|d|$ fails
when $\lambda_2\neq 0$. This leads to extra difficulties in the
study of well-posedness, especially in dealing with those highly
nonlinear stress terms in $\sigma$ (cf. \cite{LL01,SL08}). Even in
the 2D case, it is hard to obtain global existence of solutions
without any further restriction on these viscous coefficients. This
is rather different from regular Newtonian fluid cases.

\textit{Summary of results}. The purpose of this paper is to study the connections between
physical parameters, namely, the Leslie coefficients and the
well-posedness as well as stability properties of the general
Ericksen--Leslie system \eqref{e1}--\eqref{e3}. In particular, we
focus on the role of Parodi's relation \eqref{lam2}.  Parodi's
relation is a consequence of Onsager's reciprocal relations in the
microscale descriptions of liquid crystals \cite{O31,O31-2}, which
are nevertheless independent of the second law of thermodynamics.
Although the physical interpretation of the reciprocal relation is
related to microscopic reversibility and laws of detailed balance of
microscopic dynamics \cite{Mazur97}, the thermodynamic basis of
Onsager's reciprocal relations have been discussed and debated by
many researchers (cf. \cite{Tr}). There are evidences that for
particular materials, Onsager's relations and their counterparts may
serve as stability conditions (cf. \cite{C74, Tr}). In this paper,
we provide specific mathematical verifications for the nematic
liquid crystal flow.

First, in Theorems \ref{global existence of
classical solution} and \ref{vconr}, under the assumption that the fluid
viscosity $\mu_4$ is sufficiently large, we show existence and uniqueness of global
solutions within suitable regularity classes and their long-time behavior (uniqueness of asymptotic limit). In this case, we see that
Parodi's relation is not a necessary assumption for the
well-posedness and long-time dynamics, while the large viscosity
constant $\mu_4$ plays a dominative role.

Next, without the largeness assumption on $\mu_4$, we first prove local wellposedness of the Ericksen--Leslie system
\eqref{e1}--\eqref{e3} (cf. Theorem \ref{locals solution}). Furthermore, in Theorem \ref{Main theorem III}, we prove global well-posedness and Lyapunov stability of the Ericksen--Leslie system, when the initial data is near certain equilibrium (local minimizer of the
elastic energy $W$ given by \eqref{WWW}). We see that Parodi's relation turns out to be crucial (as a sufficient condition) in obtaining well-posedness and (nonlinear) stability of the Ericksen--Leslie system.

Finally, we demonstrate the
connection between Parodi's relation and linear stability of the
original Ericksen--Leslie system \eqref{the most primitive equ 1}--\eqref{the
most primitive equ 3} (namely, with the constraint $|d|=1$). The result obtained in Theorem  \ref{proposition on currie} indicates that without Parodi's relation, the linearized Ericksen--Leslie system admits unstable plane wave solutions. In other words, Parodi's relation is a necessary condition for linear stability of the Ericksen--Leslie system.

\br
Our results presented in this paper are stated in the three dimensional case $n=3$.
When the spatial dimension $n=2$, if we consider the velocity field $v: \Omega\times [0,T]\to \mathbb{R}^2$
and director $d: \Omega\times [0,T]\to \mathbb{R}^2$, namely, the molecule director $d$ is also confined in
a plane, then it is easy to verify that all the results we obtained below for the 3D case also hold in 2D
(sometimes even under weaker assumptions, see e.g., Remark \ref{2Da}). However, if one wishes to consider
the Ericksen--Leslie system in a 2D domian $\Omega\subset \mathbb{R}^2$ but the director field $d$ is still
 allowed to be a three dimensional vector, some troubles will come up.
For instance, the parallel transport terms $\Omega d$ (rotation) and
$A d$ (stretching) cannot be properly defined, because $\Omega$ and
$A$ are $2\times 2$ matrices but $d$ is a 3D vector. We want to
mention that such problem does not apply to simplified liquid
crystal system of small molecules \cite{LL95}. In particular, we
refer to recent works \cite{LinLinWang10, LinWang10, XuZhang12} for
a simplified liquid crystal system in 2D but the director $d:
\Omega\times [0,T]\to S^2$, which is three dimensional (with the
constraint $|d|=1$). \er

\textit{Plan of the paper}. The remaining part of the paper is organized as follows. In Section
2, we discuss the basic energy dissipation law of the system
\eqref{e1}--\eqref{e3}.  In Section 3,  for the given energy law, we
re-derive the Ericksen--Leslie system via an energy variational approach. In
particular, under Parodi's relation, we show the specific relations between results from
Least Action Principle and those from Maximum Dissipation Principle. In Section 4, we prove global
well-posedness under large viscosity assumption on $\mu_4$ and the
long-time behavior of global solutions. In particular, we show that
any global solution will converge to a single steady state as time
tends to infinity and provide an estimate on the convergence rate.
In Section 5, we prove the well-posedness and stability when the
initial velocity is near zero and the initial director is close to a
local energy minimizer under Parodi's relation. In Section 6, we
discuss the connection between Parodi's relation and linear
stability of the original Ericksen--Leslie system.
In Section 7, the appendix
section, we present some detailed calculations needed for the previous
sections.


\section{Basic Energy Law}\label{basicEL}
\setcounter{equation}{0}

Generally speaking, singularities that can be observed for a
physical system are those energetically admissible ones (cf.
\cite{LLZ07}). It has been pointed out that the Ericksen--Leslie system
\eqref{e1}--\eqref{I.C. in nonzero case} obeys some dissipative
energy inequality under proper assumptions on those physical
coefficients (cf. \cite{LL01}).

The total energy of the Ericksen--Leslie system
\eqref{e1}--\eqref{I.C. in nonzero case} consists of kinetic and
potential energies and it is given by
 \be
 \mathcal{E}(t)=\frac{1}{2}\|v\|^2+\frac{1}{2}\|\nabla
d\|^2+\int_{Q}\mathcal{F}(d)dx.
 \label{total energy of the system}
 \ee
 For the sake of simplicity, we
denote the inner product on $L^2(Q)$ (or $\mathbf{L}^2(Q)$, for the
corresponding vector space) by $(\cdot,\cdot)$ and the associated
norm by $\|\cdot\|$.

By a direct calculation with smooth solutions
$(v,d)$ to the system \eqref{e1}--\eqref{I.C. in nonzero case}, we
have (cf. \cite[Theorem 2.1]{LL01} for detailed calculations for
the corresponding initial boundary value problem)
 \bea
  \frac{d}{dt}\mathcal{E}(t)&=&
-\int_{Q}\Big(\mu_1|d^{T}Ad|^2+\frac{\mu_4}{2}|\nabla
v|^2+(\mu_5+\mu_6)|Ad|^2
\Big)dx \non\\
&&+\lambda_1\|N\|^2+(\lambda_2-\mu_2-\mu_3)(N, Ad).
 \label{BELnoPa}
 \eea
Here
and after, we always assume that
 \bea
 \lambda_1&<&0, \label{lama1a}\\
 \mu_5+\mu_6 &\geq& 0,  \label{mu56}  \\
 \mu_1&\geq& 0, \quad \mu_4>0. \label{mu14}
 \eea
These assumptions are assumed to provide
necessary conditions for the dissipation of the director field \cite{E91, Le79}.
As indicated in \cite{LL01}, the assumption \eqref{lama1} guarantees the
existence of the Lyapunov-type functional. However, we note that Parodi's
relation \eqref{lam2} is not necessary in the derivation of
\eqref{BELnoPa}. If \eqref{lam2} is employed, i.e., $\lambda_2=-(\mu_2+\mu_3)$, we immediately arrive
at the energy inequality obtained in \cite[Theorem 2.1]{LL01}.
Moreover, if we further assume
$\lambda_2=0$, it follows from \eqref{BELnoPa}--\eqref{mu14} that
$\mathcal{E}(t)$ is decreasing in time, which is exactly the case
studied in \cite{LL01}.
 \bl [Basic energy law with Parodi's relation]
 \label{BEL}
 Suppose that the assumptions  \eqref{lama1},
 \eqref{lam2}, \eqref{lama1a}, \eqref{mu56} and \eqref{mu14} are satisfied. In addition, if we assume
 \be \frac{(\lambda_2)^2}{-\lambda_1} \leq
 \mu_5+\mu_6, \label{critical point of lambda 2}
 \ee
 then the total energy $\mathcal{E}(t)$ is decreasing in time such that
 \bea
 \frac{d}{dt}\mathcal{E}(t)
&=& -\int_{Q}\Big(\mu_1|d^{T}Ad|^2+\frac{\mu_4}{2}|\nabla v|^2
\Big)dx +\frac{1}{\lambda_1}\|\Delta d-f(d)\|^2
 \non\\
 &&-\Big(\mu_5+\mu_6+\frac{(\lambda_2)^2}{\lambda_1}\Big)\|Ad\|^2\non\\
 &\leq& 0.
 \label{basic energy law at the critical point of lambda 2}
 \eea
 \el
\begin{proof}
By Parodi's relation \eqref{lam2}, i.e., $\lambda_2=-(\mu_2+\mu_3)$,
we infer from the transport equation of $d$ (cf. \eqref{e3}) that
 \bea && \lambda_1\|N\|^2+(\lambda_2-\mu_2-\mu_3)(N, Ad)\non\\
 &
\stackrel{\eqref{lam2}}{=}&(N, \lambda_1N+\lambda_2Ad)+\lambda_2(N, Ad) \non\\
&=& (N, \lambda_1N+\lambda_2Ad)+ \left(N+\frac{\lambda_2}{\lambda_1}A d, \lambda_2 A d\right)-\frac{(\lambda_2)^2}{\lambda_1}\|Ad\|^2\non\\
&=& \lambda_1 \left\| N+\frac{\lambda_2}{\lambda_1}A d\right\|^2-\frac{(\lambda_2)^2}{\lambda_1}\|Ad\|^2\non\\
 &\stackrel{\eqref{e3}}{=}&
  \frac{1}{\lambda_1}\|\Delta
d-f(d)\|^2-\frac{(\lambda_2)^2}{\lambda_1}\|Ad\|^2. \label{chPa}
 \eea
 Inserting the above result into \eqref{BELnoPa}, we arrive at our
 conclusion. \qed
\end{proof}

On the contrary, if Parodi's relation \eqref{lam2} does not hold,
additional assumptions have to be imposed to ensure the dissipation of
the total energy.

\bl[Basic energy law without Parodi's relation] Suppose that \eqref{lama1}, \eqref{lama1a}, \eqref{mu56} and
\eqref{mu14} are satisfied. If we further assume that
 \be
|\lambda_2-\mu_2-\mu_3| \leq
2\sqrt{-\lambda_1}\sqrt{\mu_5+\mu_6},\label{noPa}
  \ee
then the following energy inequality holds:
 \be \frac{d}{dt}\mathcal{E}(t)\leq
-\int_{Q}\left(\mu_1|d^{T}Ad|^2+\frac{\mu_4}{2}|\nabla v|^2
\right)dx\leq 0.\label{belb}
 \ee
 Moreover, if
 \be
|\lambda_2-\mu_2-\mu_3| <
2\sqrt{-\lambda_1}\sqrt{\mu_5+\mu_6},\label{noPa1}
  \ee
then the dissipation in \eqref{belb} will be stronger in the sense
that there exists a small constant
  $\eta>0$,
  \bea
 \frac{d}{dt}\mathcal{E}(t)  &\leq&
-\int_{Q}\left(\mu_1|d^{T}Ad|^2+\frac{\mu_4}{2}|\nabla v|^2
\right)dx -\eta (\|Ad\|^2+\|N\|^2)\non\\
&\leq & 0.\label{belc}
 \eea
 \el
 \begin{proof}
 The conclusion easily follows from \eqref{BELnoPa} and the
 Cauchy--Schwarz inequality. \qed
 \end{proof}

\section{Energy Variational Approaches} \label{EnVarA}
\setcounter{equation}{0}
The energy variational approaches (\textit{EnVarA}) provide  unified
variational frameworks in studying  complex fluids with
microstructures  (cf. \cite {HKL}). From the energetic point of
view, the Ericksen--Leslie system \eqref{e1}--\eqref{e3} exhibits competition
between the macroscopic flow field and the microscopic director
field, through the coupling between the kinematic transport of the
director $d$ by the macroscopic velocity field $v$ and the averaged
microscopic effect in the form of induced macroscopic elastic stress
on the macroscopic flow field. This contributes to some interesting
hydrodynamic and rheological properties of the liquid crystal flows.
Based on the basic energy law in Section 2, and due to the special
feature of nematic liquid crystal flow such that the molecular
orientations are transported and deformed by the flow under parallel
transport, we shall develop a formal physical derivation of the
induced elastic stress through \emph{EnVarA}. This will provide us
with a further understanding of the competition between hydrodynamic
kinetic energy and internal elastic energy due to the presence of
the orientation field $d$.

The energetic variational treatment of complex fluids starts with
the energy dissipative law for the whole coupled system
\cite{HKL,ZGLWS}:
 \be
\dfrac{dE^{tot}}{dt}=-\mathcal{D},
 \non
 \ee
 where $E^{tot}=E^{kinetic}+E^{int}$ is the total
energy consisting of the kinetic energy and free energy. Here
$\mathcal{D}$ is the dissipation function which is equal to  the
entropy production of the system in isothermal situations. Following
Onsager's  linear response assumption, we assume that $\mathcal{D}$
is a linear combination of the squares of various rate functions
such as velocity, rate of strain or the material derivative of
internal variables (cf. \cite{O31,O31-2,O53,HKL,ZGLWS}). The
\emph{EnVarA} combines the maximum dissipation principle (for long
time dynamics) and the least action principle, or equivalently, the
principle of virtual work (for intrinsic and short time dynamics)
into a force balance law that expands the conservation law of
momentum to include dissipation (cf. \cite{La96, CH53}). The least
action principle gives us the Hamiltonian (reversible) part of the
system related to conservative forces. Meanwhile, the maximum
dissipation principle provides the dissipative (irreversible) part
of the system related to dissipative forces. In this way, we can
distinguish the conservative and dissipative parts among the induced
stress terms.

In the context of basic mechanics, both hydrodynamics and
elasticity,  the basic variable is the flow map $x(X, t)$ (particle
trajectory for any fixed $X$) . Here, $X$ is the original labeling
(the Lagrangian coordinate) of the particle, which is also referred
to as the material coordinate, while $x$ is the current (Eulerian)
coordinate and is also called the reference coordinate. For a given
velocity field $v(x, t)$, the flow map is defined by the ordinary
differential equations:
   \be
   x_t=v(x(X, t), t), \;\; x(X, 0)=X.\non
   \ee
The deformation tensor $\mathbb{F}$ associated with the flow field
is given by $$\mathbb{F}_{ij}=\frac{\partial x_i}{\partial X_j}.$$
Without ambiguity, we define $\mathbb{F}(x(X,t),t)=\mathbb{F}(X,t)$.
Applying the chain rule, we can see that $\mathbb{F}(x,t)$ and
$\mathbb{F}^{-T}(x,t)$ satisfy the following transport equations
(cf. e.g., \cite{GU, Lar})
\bea
&&  \mathbb{F}_t+v\cdot\nabla \mathbb{F}=\nabla v\mathbb{F},\non\\
&& \mathbb{F}^{-T}_t+v\cdot\nabla \mathbb{F}^{-T}=-\nabla^T
v\mathbb{F}^{-T}.\non
 \eea
 Kinematic transport of the director field $d$ represents the (microscopic) molecules moving in the (macroscopic) flow \cite{Lar, SL08}.
 For general ellipsoid shaped liquid crystal
molecules, the transport of $d$ can be represented by
 \be d(x(X,t),t)=\mathbb{E}d_0(X)\label{transE} \ee
 with
$d_0(X)$ being the initial configuration. The deformation tensor
$\mathbb{E}(x(X,t),t)$ carries all the information of micro
structures and configurations. It satisfies the following transport
equation whose right-hand side can also be reformulated into a
combination of a symmetric part and a skew part: (cf. \cite{SL08,
LLZ07,Je})
 \bea \mathbb{E}_t+v\cdot\nabla \mathbb{E}&=&\Big[\alpha\nabla v+(1-\alpha)(-\nabla^{T}v)
\Big]\mathbb{E}\non\\
&=& \Omega\mathbb{E}+(2\alpha-1)A\mathbb{E}.\label{mathcal
E equ 1}
 \eea
 Such solutions are called Jeffrey's orbits (cf. \cite{Je}). By the fundamental work of Jeffrey \cite{Je}, the parameter
  $$
  \eta=2\alpha-1=\frac{r^2-1}{r^2+1}\in [-1,1], \quad r\in \mathbb{R}
 $$
 is related to the aspect ratio of the ellipsoids. Recently, we have shown that $\eta$ can also be related to the slippage between the particles and the flow \cite{SL08}.
In the present case, we see that
$$\alpha=\frac12\left(1-\frac{\lambda_2}{\lambda_1}\right).$$

In what follows, we shall apply \emph{EnVarA} to recover the system
\eqref{e1}--\eqref{e3} from the basic energy law under the
assumption that both \eqref{lama1} and Parodi's relation
\eqref{lam2} are satisfied. The kinetic energy $E^{kinetic}$ and
internal elastic energy $E^{int}$ of the system
\eqref{e1}--\eqref{e3} are given by
\[ E^{kinetic}=\frac12\|v\|^2,\quad  E^{int}=E(d)=\frac{1}{2}\|\nabla d\|^2+\int_{Q}\mathcal{F}(d)dx. \]
 The Legendre transformation yields the action functional $ \mathbb{A}$ of the
particle trajectories in terms of the flow map $x(X,t)$:
 \be
 \mathbb{A}(x)=\int_0^T (E^{kinetic}-E^{int})dt,\non
 \ee
 which represents the competition between the kinetic energy and the internal energy.
If there is no internal microscopic damping, we deduce the total
(pure) transport equation of $d$  from \eqref{mathcal E equ 1} such that
 \bea \frac{Dd}{Dt}&=&d_t+v\cdot\nabla
d-\alpha\nabla v\, d+(1-\alpha)(\nabla^{T}v)d\non\\
&=& d_t+v\cdot \nabla
d-\Omega d+\frac{\lambda_2}{\lambda_1}Ad\non\\
&=& 0. \label{total transport
of d}
 \eea
The least action principle optimizes the action $\mathbb{A}$ with
respect to all volume preserving trajectories $x(X,t)$, i.e.,
 $\delta_x \mathbb{A}=0$, with incompressibility of the fluid $\nabla \cdot v=0$. Consequently,
we obtain
the conservative force
balance equation of classical Hamiltonian mechanics (see \eqref{weak LAP} for its weak variational form)
 \be v_t+v\cdot\nabla
v=-\nabla P-\nabla \cdot(\nabla d\odot\nabla
d)+\nabla\cdot\tilde{\sigma},\label{conseva part}
\ee
where
\bea
 \tilde{\sigma}&=& -
\frac{1}{2}\left(1-\frac{\lambda_2}{\lambda_1}\right) (\Delta
d-f(d))\otimes d\non\\
&&  + \frac{1}{2}\left(1+\frac{\lambda_2}{\lambda_1}\right)  d\otimes (\Delta
 d-f(d)).\label{tisigma}
 \eea
 Here, the symbol
$\otimes$ denotes the usual Kronecker multiplication, namely,
$(a\otimes b)_{i,j}=a_ib_j$ for $a,b\in \mathbb{R}^3$ and
$1\leq i, j\leq 3$. We also note that the stress tensor $\tilde{\sigma}$ is not symmetric due to the different coefficients of its two components. Together with \eqref{total transport
of d}, we recover the conservative (Hamiltonian) part of the full
system \eqref{e1}--\eqref{e3} (see Section \ref{cLAP} for the detailed
calculations).


On the other hand, taking the internal dissipation into account together with the transport
equation \eqref{total transport of d},
 we get
\bea d_t+v\cdot\nabla
d-\Omega\,d+\frac{\lambda_2}{\lambda_1}A\,d&=&\frac{1}{\lambda_1}\frac{\delta
E^{int}}{\delta d}\non\\
&=& -\frac{1}{\lambda_1}(\Delta d-f(d)),
 \label{transport equ of d derived from EVA}
  \eea
which reflects the elastic relaxation dynamics. The dissipation
functional $\mathcal{D}$ to the system \eqref{e1}--\eqref{e3} is in
terms of the variables $A$ and $N$ (cf. \eqref{BELnoPa}) (we remark
that our dissipation functional, like in \cite{B70}, departs from
those loosely defined by Onsager in \cite{O31-2}).
 Under Parodi's relation \eqref{lam2} and by \eqref{transport equ of d derived from EVA}, it can be transformed into the following form
(cf. \eqref{chPa})
 \bea \mathcal{D}&=&
\mu_1\|d^{T}Ad\|^2+\frac{\mu_4}{2}\|\nabla
v\|^2-\lambda_1\left\|d_t+v\cdot\nabla
d-\Omega\,d+\frac{\lambda_2}{\lambda_1}A\,d\right\|^2\non\\
&& +\Big(\mu_5+\mu_6+\frac{(\lambda_2)^2}{\lambda_1}\Big)\|Ad\|^2.\label{DIS2}
 \eea
 According to the maximum dissipation principle \cite{O31,O31-2,O53}, we take
$\delta_v\left(\frac12\mathcal{D}\right)=0$ (performing variation
with respect to the rate function, i.e., the velocity $v$ in
Eulerian coordinate) with incompressibility of the fluid $\nabla
\cdot v=0$. This yields the dissipative force balance law equivalent
to the conservation of momentum (see Section \ref{cMDP} for the
detailed calculations):
 \be 0=-\nabla P-\nabla\cdot(\nabla d\odot \nabla d)+
\nabla\cdot\sigma,\label{momentum equation from dissipative energy
a}
 \ee
 where
 \bea
  \sigma&= &\mu_1(d^TAd)d\otimes d+\mu_2N\otimes d
+\mu_3d\otimes N+\mu_4 A\non\\
&& +\mu_5 Ad\otimes d+\mu_6d\otimes
Ad,\label{sigma}
 \eea
with constants
 \be \mu_2=\frac{1}{2}(\lambda_1-\lambda_2), \ \ \
\mu_3=-\frac{1}{2}(\lambda_1+\lambda_2). \non
 \ee
Combining \eqref{momentum equation from dissipative energy a} with
\eqref{transport equ of d derived from EVA}, we recover the
dissipative part of the full system \eqref{e1}--\eqref{e3}, which
stands for the macroscopic long time dynamics.

The Ericksen--Leslie system \eqref{e1}--\eqref{e3} is the hybrid of these two conservative/dissipative systems.
Combining the dissipative part derived from maximum dissipation
principle (cf. \eqref{momentum equation from dissipative energy a})
with the conservative part derived from the least action principle
(cf. \eqref{conseva part}), and taking into account the total equation of $d$ \eqref{transport equ of d derived from
EVA}, we recover the full system
\eqref{e1}--\eqref{e3}.

 \begin{remark}
 We first observe from \eqref{tisigma} and \eqref{transport equ of d
derived from EVA} (i.e., $-\lambda_1N-\lambda_2A\,d=\Delta
d-f(d)$) that
 \be \tilde{\sigma} =\mu_2N\otimes d+\mu_3d\otimes
N+\eta_5A\,d\otimes d+\eta_6d\otimes A\,d,\label{vwrk}
 \ee
 with constants
 \bea && \mu_2=\frac{1}{2}(\lambda_1-\lambda_2), \ \
\mu_3=-\frac{1}{2}(\lambda_1+\lambda_2), \non\\
&&
\eta_5=\frac{1}{2}\left(\lambda_2-\frac{(\lambda_2)^2}{\lambda_1}\right),
\ \
\eta_6=-\frac{1}{2}\left(\lambda_2+\frac{(\lambda_2)^2}{\lambda_1}\right).
\label{eta 5 and eta 6 in POV}
 \eea
 The interesting fact from the above derivation is that the
induced stress terms
 \[-\nabla d \odot \nabla d+\mu_2 N\otimes  d+\mu_3 d \otimes  N
+\eta_5 Ad\otimes  d +\eta_6 d\otimes  Ad
 \]
 can be derived either by the least action principle (cf.
\eqref{vwrk}) or the maximum dissipation principle (contained in
\eqref{sigma}). Therefore, they can either be recognized as
conservative or dissipative. However, the remaining part in \eqref{sigma}
 \be \mu_1(d^T A d)d\otimes  d+\mu_4  A+(\mu_5-\eta_5)Ad\otimes  d
+(\mu_6-\eta_6)d\otimes  Ad
 \label{dissfor}
 \ee
 can only be derived by the
 maximum dissipation principle. This fact indicates that these terms
 in \eqref{dissfor}
 are
dissipative. In particular, at the critical value of $\lambda_2$,
i.e.,
 \be
 |\lambda_2|=\sqrt{-\lambda_1}\sqrt{\mu_5+\mu_6},\label{crilam2}
 \ee
 the dissipation functional of the
system \eqref{e1}--\eqref{e3} is reduced to
 \be
\mathcal{D}=\mu_1\|d^TAd\|^2+ \frac{\mu_4}{2}\|\nabla v\|^2
-\lambda_1\left\|d_t+v\cdot\nabla
d-\Omega\,d+\frac{\lambda_2}{\lambda_1}A\,d\right\|^2.\non
 \ee
 It turns out that $\mu_5=\eta_5$, $\mu_6=\eta_6$ and
the only two dissipative terms are given by those associated with
$\mu_1$ and $\mu_4$.\qed
\end{remark}

 Finally, we look at some special
cases of the system \eqref{e1}--\eqref{e3}. We assume that
\eqref{lama1}--\eqref{lam2} are satisfied and set
 \bea
 && \mu_1=0,\non\\
 && \mu_2=\frac{1}{2}\left(\lambda_1-\lambda_2\right), \qquad\quad
\mu_3=-\frac{1}{2}\left(\lambda_1+\lambda_2\right), \non\\
 && \mu_5=\frac{1}{2}\left(\lambda_2-\frac{(\lambda_2)^2}{\lambda_1}
\right), \quad
\mu_6=-\frac{1}{2}\left(\lambda_2+\frac{(\lambda_2)^2}{\lambda_1}
\right). \non
 \eea
 Since \eqref{crilam2} is now satisfied, then the system \eqref{e1}--\eqref{e3} can be reduced to
  \bea
&& v_t+v\cdot\nabla v+\nabla p=\frac{\mu_4}{2}\Delta v
-\nabla\cdot(\nabla d\odot\nabla
d) +\nabla\cdot \sigma, \label{1 for Huan's model}\\
&& \nabla\cdot v=0, \\
&& d_t+v\cdot\nabla d-\frac{\mu_2}{\lambda_1}\nabla
v\,d-\frac{\mu_3}{\lambda_1}\nabla^{T}v\,d=-\frac{1}{\lambda_1}(\Delta
d-f), \label{3 for Huan's model}
 \eea
where
 \be \sigma = -\frac{\mu_2}{\lambda_1}(\Delta d-f)\otimes  d
-\frac{\mu_3}{\lambda_1}d\otimes (\Delta d-f).
 \label{1a for Huan's model}
 \ee
  \begin{remark}\label{symp}
 The system \eqref{1 for Huan's model}--\eqref{1a for Huan's model} is consistent with these simplified models
 studied in \cite{GW3, LLZ07,SL08, LWX10}:

(1) The rod-like molecule model: $$ \mu_2=\lambda_1=-\lambda_2, \quad \mu_3=0.$$
 In this case, the director field $d$
satisfies the kinematic transport relation
$$d(x(X, t), t)=\mathbb{F}d_0(X), \quad \text{where}\quad \dot{\mathbb{F}} =\nabla v \mathbb{F}.$$

(2) The disc-like molecule model: $$ \mu_2=0, \quad \mu_3=-\lambda_1=-\lambda_2.$$
 In this case, $d$ satisfies
 $$ d(x(X,t), t)=\mathbb{F}^{-T} d_0(X),
  \quad \text{where}\quad \dot{\mathbb{F}}^{-T}=-\nabla^T v \mathbb{F}^{-T}.$$

(3) The sphere-like molecule model:
$$\mu_2=\frac{\lambda_1}{2}, \quad \mu_3=-\frac{\lambda_1}{2}, \quad
\lambda_2=0.$$
 In this case, $d$ satisfies $$ d(x(X, t),
t)=\mathbb{E} d_0(X),\quad \quad \text{where}\quad \dot{\mathbb{E}}=\frac{1}{2}(\nabla
v-\nabla^{T} v)\mathbb{E}.$$
\end{remark}

\section{Well-posedness and Long-time Behavior for Large Viscosity $\mu_4$}
\setcounter{equation}{0} \label{Larmu4}

For any Banach space $X$, we denote by $\mathbf{X}$ the vector space
$(X)^r$, $r\in \mathbb{N}$, endowed with the product norms.
 We recall the well established functional
settings for periodic problems (cf. \cite{Te}):
 \bea
H^m_p(Q)&=&\{u\in H^m(\mathbb{R}^3,\mathbb{R})\ |\ u(x+e_i)=u(x)\},\non\\
 \dot{H}^m_p(Q) &=& H^m_p(Q)\cap \left\{u:\ \int_Q u(x)dx=0\
\right\},\non\\
H&=&\{v\in \mathbf{L}^2_p(Q) ,\ \nabla\cdot v=0\},\ \ \text{where}\
\mathbf{L}^2_p(Q)=\mathbf{H}^0_p(Q),
\non\\
V&=&\{v\in \dot{\mathbf{H}}^1_p(Q),\ \nabla\cdot v=0\},\non\\
V'&=&\text{the\ dual space of\ } V.\non
 \eea
  We denote the inner
product on $L^2_p(Q)$ (or $\mathbf{L}^2_p(Q))$ as well as $H$ by
$(\cdot,\cdot)$ and the associated norm by $\|\cdot\|$. The space
$H^m_p(Q)$ will be short-handed by $H^m_p$ and the $H^m$-inner product
($m\in \mathbb{N}$) can be given by $ \langle v,
u\rangle_{H^m}=\sum_{|\kappa|=0}^m(D^\kappa v, D^\kappa u)$, where
$\kappa=(\kappa_1,..., \kappa_n)$ is a multi-index of length
$|\kappa|=\sum_{i=1}^n\kappa_i$ and
$D^\kappa=\partial_{x_1}^{\kappa_1},...,\partial_{x_n}^{\kappa_n}$.
We denote by $C$ the genetic constant possibly depending on
$\lambda_i's, \mu_i's, Q, f$ and the initial data. Special
dependence will be pointed out explicitly if necessary. Throughout
the paper, the Einstein summation convention will be used.

 As mentioned in the introduction, we use the Ginzburg--Landau approximation to reduce
 the order of nonlinearities caused by the constraint $|d|=1$. We note that
 either for a highly simplified liquid crystal model (cf. \cite{LL95}), or for the general Ericksen--Leslie system
\eqref{e1}--\eqref{I.C. in nonzero case} with the artificial
assumption $\lambda_2=0$ (cf. \cite{LL01}), a certain type of maximum principle holds
for the $d$-equation, namely, if $|d_0|\leq 1$ then $|d|\leq 1$.
This fact still holds for our current periodic settings with the
same assumption on $\lambda_2$. Then combing the basic energy law,
one can deduce that
 \bea && v \in L^{\infty}(0, T; H)\cap L^2(0, T; V), \label{weak reg1}\\
 && d \in L^{\infty}(0, T; \mathbf{H}^1_p\cap \mathbf{L}^\infty_p)\cap
 L^2(0, T; \mathbf{H}^2_p),\label{weak reg}
 \eea
 which is sufficient for the following formulation of weak
 solutions:
 \bd \label{weakD}
 $(v, d)$ is called a weak solution of \eqref{e1}--\eqref{I.C. in nonzero case} in
$Q_{T} = Q \times (0, T)$ if it satisfies \eqref{weak reg1}, \eqref{weak reg} and for
any smooth function $\psi(t)$ with $\psi(T)=0$ and $\phi(x) \in
\mathbf{H}^1_p$, the following weak formulation together with the initial and
boundary conditions \eqref{B.C. in nonzero case} and \eqref{I.C. in
nonzero case} hold:
 \bea
&&-\int_{0}^{T}(v, \psi_t\phi)dt+\int_{0}^{T}(v\cdot\nabla v,
\psi\phi)dt \non\\
=&& -(v_0, \phi)\psi(0)+\int_{0}^{T}(\nabla d\odot\nabla d,
\psi\nabla\phi)dt-\int_{0}^{T}(\sigma, \psi\nabla\phi)dt,\non
 \eea
where $\sigma$ is defined in \eqref{v1}, and
 \bea
&&-\int_{0}^{T}(d, \psi_t\phi)dt+\int_{0}^{T}(v\cdot\nabla d,
\psi\phi)dt-\int_{0}^{T}(\Omega d, \psi\phi)dt
+\frac{\lambda_2}{\lambda_1}\int_{0}^{T}(Ad, \psi\phi)dt
\non\\
&=&-(d_0, \phi)\psi(0)-\frac{1}{\lambda_1}\int_{0}^{T}(\Delta d -
f(d), \psi\phi)dt.\non
 \eea
 \ed
With the help of the maximum principle under the assumption $\la_2=0$, in \cite{LL01}, the authors obtained
the existence of weak solutions by applying a semi-Galerkin
procedure (cf. \cite{LL95} for the simplified liquid crystal system). For the more
general case considered in the present paper, we no longer assume
 that $\lambda_2 = 0$. Consequently, the kinetic transport
includes the stretching effect that leads to the loss of maximum
principle for $d$. In order to ensure that the extra
stress term $\nabla\cdot\sigma$ is well defined in the weak
formulation (cf. Definition \ref{weakD}), the regularity
$$ d \in L^\infty(0,T;\mathbf{L}^{\infty})$$
turns out to be essential (we
refer to \cite{SL08} for the discussions on the rod-like molecule
liquid crystal model, which is a special case of the general
system \eqref{e1}--\eqref{I.C. in nonzero case}). In the subsequent analysis, we have to confine ourselves to the
periodic boundary conditions, which helps us to avoid extra
difficulties involving boundary terms when performing integration by
parts in the derivation of higher-order energy inequalities.

Finally, we remark that existence of global weak solutions to simplified liquid crystal systems in Remark \ref{symp} has been obtained in \cite{CR} with a
suitable set of boundary conditions (i.e., homogeneous Dirichlet boundary condition for
 $v$ together with the homogeneous Neumann boundary condition for $d$). Their argument is based on an appropriate
choice of test functions that leads to a suitable weak formulation of the system and thus overcomes difficulties from the stretching effect. Quite recently, the existence of global weak solutions with energy bounds to the general Ericksen--Leslie system \eqref{e1}--\eqref{e3} has been proved in \cite{CRW} by extending the argument in \cite{CR}. Moreover, in \cite{CRW}, under Parodi's relation, the authors prove the local existence/uniqueness of classical solutions to the general Ericksen--Leslie system \eqref{e1}--\eqref{e3} and establish a Beale--Kato--Majda type blow-up criterion.

\subsection{Galerkin approximation}

Under periodic settings, one can define a mapping $S$ associated
with the Stokes problem:
 $S u=-\Delta u$ for $ u\in D(S)=\{u\in H, S u\in H\}=\dot{\mathbf{H}}^2_p\cap H$.
 The operator $S$ can be seen as an unbounded
positive linear self-adjoint operator on $H$. If $D(S)$ is endowed
with the norm induced by $\dot{\mathbf{H}}^0_p$, then $S$ becomes an
isomorphism from $D(S)$ onto $H$.

 Let $\{\phi_i\}_{i=1}^\infty$ with
$\|\phi_i\|=1$ be the eigenvectors of the Stokes operator $S$ in the
periodic case with zero mean,
 \be -\Delta\phi_i + \nabla P_i = \kappa_i\phi_i,\quad
\nabla\cdot\phi_i = 0 \ \ \mbox{in Q},\quad
\int_{Q}\phi_i(x)\,dx=0,\non
 \ee
 where $P_i \in L^{2}$ and $0<\kappa_1\leq\kappa_2\leq ...$ are eigenvalues. The eigenvectors
 ${\phi_i}$ are smooth and the sequence $\{\phi_i\}_{i=1}^\infty$ forms an orthogonal
basis of $H$ (cf. \cite{Te}). Let
$$ \mathrm{P}_m: H\rightarrow H_m
\doteq span\{\phi_1, \cdots, \phi_m\}, \quad m\in \mathbb{N}.
 $$
 We
consider the following (variational) approximate problem:
 \bea
 && (\partial_t v_m, u_m)+ (v_m\cdot\nabla
v_m, u_m) \non\\
&&\ \ =  (\nabla d_m\odot\nabla d_m, \nabla u_m)-
(\sigma_m,\nabla u_m), \quad \forall\  u_m\in H_m,\label{equation 1 in approximate system}\\
 && N_m+\frac{\lambda_2}{\lambda_1}A_m d_m=-\frac{1}{\lambda_1}\Delta
 d_m-f(d_m), \label{equation 3 in approximate system}\\
 &&v_m(x, 0)=\mathrm{P}_m v_0(x), \ \ d_m(x, 0)=d_0(x), \label{I.C. in approximate problem}\\
 &&v_m(x+e_i, t)=v_m(x, t), \ \ d_m(x+e_i, t)=d_m(x,t),
 \label{B.C. in approximate problem}
 \eea
where
 \bea
 \Omega_{m} &=&\frac12(\nabla v_{m}-\nabla^{T}v_{m}), \ \
 A_{m}=\frac12(\nabla v_{m}+\nabla^{T}v_{m}),\non\\
  N_m&=&\partial_t d_{m}+ (v_m\cdot\nabla) d_m+\Omega_m
 d_m,\non
  \\
\sigma_m&=&\mu_1(d_{m}^{T}A_{m}d_{m})d_{m}\otimes
d_{m}+\mu_2N_{m}\otimes d_{m}+\mu_3d_{m}\otimes
N_{m}+\mu_4A_{m}
\non\\
&&+\mu_5A_{m}d_{m}\otimes d_{m}+\mu_6d_{m}\otimes A_{m}d_{m}.\non
 \eea
We can prove local well-posedness of the approximate problem
\eqref{equation 1 in approximate system}--\eqref{B.C. in approximate
problem} by a similar semi-Galerkin procedure like \cite{SL08} (see
also \ \cite{LL95,LL01}). Smoothness of the approximate solutions in
the interior of $Q_{T_0}=(0,T_0)\times Q$ follows from the regularity theory for
parabolic equations and a bootstrap argument (cf. \cite{LA1,LL95}).
The uniqueness of smooth solutions can be proved by performing energy estimates on the difference of two
different solutions and using Gronwall's inequality. Since the proof is
standard, we omit the details here.
 \bp
 \label{Theorem for existence of approximate problem}
 Suppose that  $v_0 \in V$, $d_0 \in \mathbf{H}^2_p$. For any $m > 0$, there is a $T_0
> 0$ depending on $v_0$, $d_0$ and $m$ such that the
approximate problem \eqref{equation 1 in approximate
system}--\eqref{B.C. in approximate problem} admits a unique weak solution
$(v_m, d_m)$ satisfying
 \bea
 &&
 v_m \in L^{\infty}(0, T_0; V) \cap L^2(0, T_0; \mathbf{H}^2_p),\non\\
 && d_m \in L^{\infty}(0, T_0; \mathbf{H}^2_p)
\cap L^2(0, T_0;  \mathbf{H}^3_p).\non
 \eea
 Furthermore, $(v_m,
d_m)$ is smooth in the interior of $Q_{T_0}$.
 \ep

\subsection{Uniform \emph{a priori} estimates}

In order to prove global existence of solutions to the problem
\eqref{e1}--\eqref{I.C. in nonzero case}, we need some uniform
estimates that are independent of the approximate parameter $m$ and
the local existence time $T_0$. These uniform estimates enable us
(i) to pass to the limit as $m\to \infty$ to obtain a weak solution
to the system \eqref{e1}--\eqref{I.C. in nonzero case} in proper
spaces; (ii) to extend the local solution to a global one on $[0,
+\infty)$. One advantage of the above mentioned semi-Galerkin scheme
is that the approximate solutions satisfy the same basic energy law
and higher-order differential inequalities as the smooth solutions
to the system \eqref{e1}--\eqref{I.C. in nonzero case}. For the sake
of simplicity, the following calculations are carried out formally
for smooth solutions. They can be justified by using the approximate
solutions to \eqref{equation 1 in approximate system}--\eqref{B.C.
in approximate problem} and then passing to the limit.

The basic energy law plays an important role in the derivation of
uniform estimates on $\mathbf{L}^2\times \mathbf{H}^1$-norm of
$(v,d)$. According to the discussions in Section \ref{basicEL}, we
consider the following two cases, in which the basic energy law
holds:\medskip
 \begin{itemize}
 \item
\textbf{Case I} (with Parodi's relation): Suppose $\lambda_2\neq 0$,
\eqref{lama1},
 \eqref{lam2}, \eqref{lama1a}--\eqref{critical
point of lambda 2};
\medskip
 \item \textbf{Case II} (without Parodi's relation): Suppose
 $\lambda_2\neq 0$, \eqref{lama1}, \eqref{lama1a}--\eqref{mu14} and \eqref{noPa1}.
 \end{itemize}
\medskip
First, we consider \textbf{Case I}. It follows from Lemma \ref{BEL}
that
 \be
\frac{d}{dt}\mathcal{E}(t) \leq -\int_{Q}
\mu_1|d^TAd|^2dx -\frac{\mu_4}{2}\|\nabla v\|^2
+\frac{1}{\lambda_1}\|\Delta d-f(d)\|^2,\quad \forall\, t\geq 0.\non
 \ee
  This easily implies the following uniform estimates
 \bea
 &&\|v(t, \cdot)\|\leq C, \quad \|d(t, \cdot)\|_{\mathbf{H}^1}\leq C, \quad \forall\, t\geq
 0,\label{low1}
 \\
 && \int_0^{+\infty} \left(\int_{Q}\mu_1|d^TAd|^2 dx +\frac{\mu_4}{2}\|\nabla
 v\|^2-\frac{1}{\lambda_1}\|\Delta d-f(d)\|^2\right) dt\leq
 C,\label{low2}
 \eea
 where the constant $C>0$ depends only on $\|v_0\|$ and
 $\|d_0\|_{\mathbf{H}^1}$.

As we have mentioned before, the regularity $d \in
L^\infty(0,T;\mathbf{L}^{\infty})$ is crucial to ensure that the
extra stress term $\nabla \cdot \sigma$ can be suitably defined in
the weak formulation. Due to the lack of maximum principle for $d$,
an alternative way is to prove higher-order (spatial) regularity of $d$, e.g., in
$L^\infty(0,T; \mathbf{H}^2)$ and use the Sobolev embedding
$\mathbf{H}^2\hookrightarrow \mathbf{L}^\infty$ ($n= 3$). For this
purpose, we derive a new type of higher-order energy inequality,
which turns out to be useful in the study of global existence of
 solutions as well as the long-time behavior (cf. \cite{LL95, LL01, SL08, LW08} for simplified liquid
crystal systems).
 \bl
 \label{higher order energy law in large viscosity case}
  Set
 \be \mathcal{A}(t)=\|\nabla v(t)\|^2+\|\Delta
d(t)-f(d(t))\|^2.
 \label{higher energy sequence}
 \ee
 Let $\underline{\mu}$ be an arbitrary positive constant. We suppose that
 $\mu_4\geq \underline{\mu}$. For $n=3$, under the assumption of \textbf{Case I}, the following inequality holds:
 \bea  \frac{d}{dt}\mathcal{A}(t) &\leq&
-\left(\frac{\mu_4}{2}-C_1{\mu_4}^{\frac{1}{2}}\tilde{\mathcal{A}}(t)
\right)\|\Delta v\|^2\non\\
&&
 +\Big(\frac{1}{2\lambda_1}+C_2\mu_4^{-\frac{1}{4}}\tilde{\mathcal{A}}(t)
 \Big)
 \|\nabla(\Delta d-f)\|^2+C_3\mathcal{A}(t),
 \label{higher order energy inequality in large viscosity case}
 \eea
 where $$\tilde{\mathcal{A}}(t)=\mathcal{A}(t)+1,$$
  $C_i$ ($i=1,2,3$) are constants depending on $Q$, $f$, $\|v_0\|$,
 $\|d_0\|_{\mathbf{H}^1}$, $\lambda_1$, $\lambda_2$, $\mu_i$   $(i=1,2,3,5,6)$ and $\underline{\mu}$.
 \el
\begin{proof}
Without loss of generality, we assume that $\underline{\mu}= 1$. The
argument is valid for arbitrary but fixed $\underline{\mu}>0$.

Using
\eqref{e1}--\eqref{e3} and integration by parts, due to the periodic
boundary conditions, we obtain that (see Section \ref{cdA} for detailed
computations)
 \bea
&&\frac{1}{2}\frac{d}{dt}\mathcal{A}(t)+\mu_1\int_{Q}(d_kd_p\nabla_lA_{kp})^2dx+\frac{\mu_4}{2}\|\Delta
v\|^2\non\\
&& \ \ +(\mu_5+\mu_6)\int_{Q}|d_j\nabla_lA_{ji}|^2dx-\frac{1}{\lambda_1}\|\nabla(\Delta d-f)\|^2  \non\\
&=&-\mu_1\int_{Q}A_{kp}\nabla_l(d_kd_p) d_id_j\nabla_lA_{ij}dx
-\mu_1\int_{Q}A_{kp}d_kd_p\nabla_l(d_id_j)\nabla_lA_{ij}dx \non\\
&&-(\mu_5+\mu_6)\int_{Q}\nabla_ld_j d_kA_{ki} \nabla_lA_{ij}dx
-(\mu_5+\mu_6)\int_{Q}d_j\nabla_ld_kA_{ki} \nabla_lA_{ij}dx
\non\\
&&-\int_Q\nabla_l (\Delta d_i-f_i) \Omega_{ij}\nabla_l d_j
dx+\int_Q(\Delta d_i-f_i) \nabla_l\Omega_{ij}\nabla_l d_j dx\non\\
&&
+2\lambda_2\int_Q N_i\nabla_l
A_{ij}\nabla_l d_j dx+\lambda_2(N, A\Delta d)
\non\\
&&
-\frac{(\lambda_2)^2}{\lambda_1}\int_{Q}|\nabla_l (A_{ij}d_j)|^2dx
+(\Delta v, v\cdot\nabla v)+\frac{1}{\lambda_1}\int_{Q}f'(d)|\Delta d-f|^2dx \non\\
&& -\Big(\Delta d-f, f'(d)\big( \Omega d
-\frac{\lambda_2}{\lambda_1}A\,d\big)\Big)+2\int_Q\nabla_j (\Delta
d_i-f_i)\nabla_l
v_j\nabla_ld_idx\non\\
&& -(\Delta d-f, v\cdot\nabla f)\non\\
&\triangleq& I_1+\ldots+I_{14}.
 \label{second time expansion of derivative of A(t)}
\eea
In what follows, we estimate the right-hand side of \eqref{second time
expansion of derivative of A(t)} term by term.
 \be
I_1 \leq \frac{\mu_1}{4}\int_{Q}(d_kd_p\nabla_lA_{kp})^2dx+
C\|d\|_{\mathbf{L}^\infty}^2\|\nabla v\|^2_{\mathbf{L}^3}\|\nabla
d\|_{\mathbf{L}^6}^2.\non
 \ee
By the estimate \eqref{low1}, we infer from the Agmon's inequality
that
 \be \|d\|_{\mathbf{L}^\infty}\leq C(1+\|\Delta
 d\|^\frac12).\label{Agmon}
 \ee
Then from  \eqref{low1}, \eqref{Agmon} and the Gagliardo--Nirenberg
inequality, we obtain
 \be  \|\nabla v\|_{\mathbf{L}^3} \leq \|\nabla
v\|^\frac12\|\Delta v
  \|^\frac12, \ \
  \|\nabla v\|_{\mathbf{L}^4} \leq \|\nabla v\|^\frac14\|\Delta v
  \|^\frac34, \ee
  \be
   \|\nabla d\|_{\mathbf{L}^6} \leq C(\|\Delta d\|+1), \label{how to control nabla delta d 1}
   \ee
   \be
 \|\Delta d\| \leq \|\Delta d-f(d)\|+\|f(d)\|\leq \|\Delta
 d-f(d)\|+C, \ee
 \bea
  \|\nabla \Delta d\| &\leq& \|\nabla(\Delta
d-f(d)\|+\|\nabla
 f(d)\|\non\\
 & \leq & \|\nabla(\Delta d-f(d)\|+\|f'(d)\|_{\mathbf{L}^\infty}\|\nabla
 d\|\non\\
 &\leq& \|\nabla(\Delta d-f(d)\|+ C(1+\|d\|^2_{\mathbf{L}^\infty})
 \non\\
 &\leq &
  \|\nabla(\Delta d-f(d)\|+ C(1+\|\Delta d\|)
 \non\\
 &\leq& \|\nabla(\Delta d-f(d)\|+C(1+ \|\nabla \Delta
 d\|^\frac12\|\nabla d\|^\frac12+\|\nabla d\|)\non
 \\
 &\leq & \|\nabla(\Delta d-f(d)\| + \frac12 \|\nabla \Delta
 d\|+C. \label{how to control nabla delta d 2}
 \eea
 As a result, it holds
 \bea && \|d\|_{\mathbf{L}^\infty}^2\|\nabla v\|^2_{\mathbf{L}^3}\|\nabla d\|_{\mathbf{L}^6}^2
\non\\
&\leq &
  C\|\nabla v\|\|\Delta v\|(\|\Delta d-f\|^3+1)\non\\
&\leq& \left(\mu_4^{\frac12}+ \mu_4^\frac12 \|\Delta d-f\|^2\right)
\|\Delta v\|^2 + C\mu_4^{-\frac12}\|\nabla v\|^2(1+\|\Delta d-f\|^4) \non\\
 &\leq& \mu_4^{\frac12}\tilde{\mathcal{A}}\|\Delta v\|^2
 +C\mu_4^{-\frac12}\|\nabla
 v\|^2\non\\
 && \ \ +C\mu_4^{-\frac12}\|\nabla
 v\|^2\left(\|\nabla \Delta d\|^\frac12\|\nabla d\|^\frac12+\|\nabla
 d\|+C\right)^4 \non\\
 &\leq& \mu_4^{\frac12}\tilde{\mathcal{A}}\|\Delta v\|^2 +C\mu_4^{-\frac12}\|\nabla
 v\|^2
 +C\mu_4^{-\frac12}\|\nabla
 v\|^2 (\|\nabla (\Delta d-f)\|^2+1)\non\\
 &\leq& \mu_4^{\frac12}\tilde{\mathcal{A}}\|\Delta v\|^2
 +C\mu_4^{-\frac12}\mathcal{A}\|\nabla (\Delta d-f)\|^2 +
 C\mathcal{A},
  \label{6th power of nabla d}
  \eea
which implies that
 \bea I_1 &\leq&  \frac{\mu_1}{4}\int_{Q}(d_id_j\nabla_lA_{ij})^2dx
 +\mu_4^{\frac12}\tilde{\mathcal{A}}\|\Delta v\|^2\non\\
 &&
 +C\mu_4^{-\frac12}\mathcal{A}\|\nabla (\Delta d-f)\|^2 +
 C\mathcal{A}.
 \eea
 For $I_2$, using integration by parts, we obtain
 \bea
I_2&=& \mu_1\int_{Q}\nabla_lA_{kp}d_kd_p \nabla_l(d_id_j)
A_{ij}dx+\mu_1\int_{Q} A_{kp}\nabla_l(d_kd_p)  \nabla_l(d_id_j) A_{ij} dx \non\\
&&+\mu_1\int_{Q} A_{kp}d_k d_p (d_j\Delta d_i +2\nabla_l d_i\nabla_l d_j+d_i\Delta d_j) A_{ij}dx
\non\\
&\leq& \frac{\mu_1}{4}\int_{Q}(d_kd_p\nabla_lA_{kp})^2dx+
C\|d\|_{\mathbf{L}^\infty}^2\|\nabla v\|^2_{\mathbf{L}^3}\|\nabla
d\|_{\mathbf{L}^6}^2\non\\
 &&\ \ +C\|\nabla v\|_{\mathbf{L}^4}^2\|\Delta
d\|\|d\|_{\mathbf{L}^{\infty}}^3, \label{mu1ES2}
 \eea
where
\bea &&  C\|\nabla v\|_{\mathbf{L}^4}^2\|\Delta
d\|\|d\|_{\mathbf{L}^{\infty}}^3 \non\\
 & \leq&
 C \|\nabla
v\|^{\frac{1}{2}} \|\Delta
v\|^{\frac{3}{2}}(\|\Delta d-f\|^\frac52+1)\non\\
 &\leq& {\mu_4}^{\frac{1}{2}}\tilde{\mathcal{A}}\|\Delta
v\|^2+C{\mu_4}^{-\frac{3}{2}}\|\nabla v\|^2(1+\|\Delta
d-f\|^4).\label{mmm1}
 \eea
Thus, the right-hand side of \eqref{mmm1} can be estimated exactly
as \eqref{6th power of nabla d}. Therefore,
\bea
  I_2
&\leq & \frac{\mu_1}{4}\int_{Q}(d_kd_p\nabla_lA_{kp})^2dx
 +\mu_4^\frac12\tilde{\mathcal{A}}\|\Delta v\|^2\non\\
 &&\ \
 +C \mu_4^{-\frac12}\mathcal{A}\|\nabla (\Delta
 d-f)\|^2+C\mathcal{A}.
\eea
Concerning $I_3$ and $I_4$, we deduce from \eqref{how to control
nabla delta d 2} that (using again \eqref{6th power of nabla
d} and $\mu_4\geq 1$)
 \bea
I_3+I_4&=&-(\mu_5+\mu_6)\int_{Q}\nabla_ld_jd_k A_{ki}
\nabla_lA_{ij}dx \non\\
&& \ \ -(\mu_5+\mu_6)\int_{Q}d_j \nabla_ld_k A_{ki}
\nabla_lA_{ij}dx
\non\\
&\leq& C\|\Delta v\|\|\nabla v\|_{\mathbf{L}^3}\|\nabla d\|_{\mathbf{L}^6}\|d\|_{\mathbf{L}^\infty}\non\\
&\leq& {\mu_4}^{\frac{1}{2}}\|\Delta
v\|^2+ C{\mu_4}^{-\frac{1}{2}}\|\nabla v\|_{\mathbf{L}^3}^2\|\nabla d\|_{\mathbf{L}^6}^2\|d\|_{\mathbf{L}^\infty}^2\non\\
&\leq& {\mu_4}^{\frac{1}{2}}\|\Delta
v\|^2+ C\|\nabla v\|_{\mathbf{L}^3}^2\|\nabla d\|_{\mathbf{L}^6}^2\|d\|_{\mathbf{L}^\infty}^2\non\\
&\leq& 2 \mu_4^{\frac12}\tilde{\mathcal{A}}\|\Delta v\|^2
 +C\mu_4^{-\frac12}\mathcal{A}\|\nabla (\Delta d-f)\|^2 +
 C\mathcal{A}.\label{mmm56}
 \eea
Next,  for $I_5$, $I_6$, we have
 \bea
 I_5&\leq& C\|\nabla(\Delta d-f)\|\|\nabla v\|_{\mathbf{L}^3}\|\nabla
d\|_{\mathbf{L}^6}\non\\
&\leq& C\|\nabla(\Delta d-f)\|\|\nabla
v\|^\frac12\|\Delta v\|^\frac12(\|\Delta d-f\|+1)
\non\\
&\leq& \mu_4^{\frac{1}{4}}\|\nabla v\|\|\Delta
v\|+C\mu_4^{-\frac{1}{4}}\tilde{\mathcal{A}}\|\nabla(\Delta
d-f)\|^2    \non\\
&\leq& \mu_4^{\frac{1}{2}}\|\Delta
v\|^2+C\mu_4^{-\frac{1}{4}}\tilde{\mathcal{A}}\|\nabla(\Delta
d-f)\|^2+C\|\nabla v\|^2,\label{EsOM1}
\eea
and
 \bea
I_6&\leq& \|\nabla\Omega\|\|\Delta d-f\|\|\nabla
d\|_{\mathbf{L}^\infty}\non\\
& \leq &  C\|\Delta v\|\|\Delta
d-f\|(\|\nabla(\Delta
d-f)\|^{\frac{3}{4}}+1) \non\\
&\leq& \mu_4^\frac12\|\Delta v\|^2+ C\mu_4^{-\frac23} \|\Delta
d-f\|^2 \|\nabla(\Delta d-f)\|^2+C\|\Delta
d-f\|^2\non\\
 &\leq& \mu_4^{\frac{1}{2}}\|\Delta
v\|^2+C\mu_4^{-\frac23} \mathcal{A}\|\nabla(\Delta
d-f)\|^2+C\mathcal{A}. \label{EsOM2}
 \eea
Using integration by parts and \eqref{e3}, we get
 \bea && I_7+I_8+I_9\non\\
 &=&2\lambda_2\int_Q N_i\nabla_l
A_{ij}\nabla_l d_j dx+\lambda_2(N, A\Delta d)
-\frac{(\lambda_2)^2}{\lambda_1}\int_{Q}|\nabla_l (A_{ij}d_j)|^2dx
\non\\
&=& \lambda_2\int_Q N_i\nabla_l A_{ij}\nabla_l d_j
dx-\lambda_2\int_Q \nabla_lN_i A_{ij}\nabla_l d_j dx\non\\
&& \ \
-\frac{(\lambda_2)^2}{\lambda_1}\int_{Q}|A_{ij}\nabla_l d_j|^2dx
 -\frac{2(\lambda_2)^2}{\lambda_1}\int_{Q}\nabla_l A_{ij} d_j
A_{ik}\nabla_l d_k
dx  \non\\
&&\ \
-\frac{(\lambda_2)^2}{\lambda_1}\int_{Q}|\nabla_l A_{ij} d_j|^2dx
\non\\
&=& -\frac{\lambda_2}{\lambda_1}\int_Q (\Delta d_i-f_i)\nabla_l
A_{ij}\nabla_l d_j dx+\frac{\lambda_2}{\lambda_1}\int_Q
\nabla_l(\Delta d_i-f_i) A_{ij}\nabla_l d_j dx
 \non\\
&& -\frac{2(\lambda_2)^2}{\lambda_1}\int_{Q}\nabla_l A_{ij} d_j
A_{ij}\nabla_l d_j
dx-\frac{(\lambda_2)^2}{\lambda_1}\int_{Q}|\nabla_l A_{ij} d_j|^2dx
\non\\
&:=& K_1+K_2+K_3+K_4.\label{NAd1}
 \eea
Similar to \eqref{mmm56}, \eqref{EsOM1} and \eqref{EsOM2}, we have
 \bea &&
K_1+K_2 \non\\
&\leq& C\|\Delta v\|\|\Delta d-f\|\|\nabla
d\|_{\mathbf{L}^\infty}+C\|\nabla(\Delta d-f)\|\|\nabla
v\|_{\mathbf{L}^3}\|\nabla d\|_{\mathbf{L}^6}\non\\
&\leq& \mu_4^{\frac{1}{2}}\|\Delta v\|^2+C\mu_4^{-\frac14}
\mathcal{A}\|\nabla(\Delta d-f)\|^2+C\mathcal{A},\non
\eea
\be K_3 \leq {\mu_4}^{\frac{1}{2}}\tilde{\mathcal{A}}\|\Delta
v\|^2+C{\mu_4}^{-\frac{3}{2}}\|\nabla v\|^2.\non
 \ee
 Furthermore, \eqref{mu56} and \eqref{critical point
of lambda 2} indicate that
 \be
K_4 - (\mu_5+\mu_6)\int_{Q}|\nabla_l A_{ij} d_j|^2dx\leq 0.
\label{NAD}
 \ee
As a result,
 \bea
 && I_7+I_8+I_9\non\\
 & \leq & \mu_4^{\frac{1}{2}}\tilde{\mathcal{A}}\|\Delta
v\|^2+(\mu_5+\mu_6)\int_{Q}|\nabla_l A_{ij}
d_j|^2dx\non\\
&& \ \ +C\mu_4^{-\frac14} \mathcal{A}\|\nabla(\Delta
d-f)\|^2+C\mathcal{A}.\non
 \eea
For terms $I_{10}$, $I_{11}$ and $I_{12}$, we have
 \bea I_{10}& \leq& \|v\|_{\mathbf{L}^4}\|\nabla v\|_{\mathbf{L}^4}\|\Delta v\|
\non\\
& \leq & C\|v\|^{\frac{1}{4}}\|\nabla v\|^{\frac{3}{4}}\|\nabla
v\|^{\frac{1}{4}}\|\Delta v\|^{\frac{3}{4}}\|\Delta v\| \non\\
&\leq& {\mu_4}^{\frac{1}{2}}\|\Delta
v\|^2+{\mu_4}^{\frac{1}{2}}\|\nabla v\|^2\|\Delta
v\|^2+C{\mu_4}^{-\frac{7}{2}}\|\nabla v\|^2 \non\\
&\leq& {\mu_4}^{\frac{1}{2}}\tilde{\mathcal{A}}\|\Delta
v\|^2+C\mathcal{A},\label{VVV} \eea
 \bea
I_{11}& \leq&
C(\|d\|_{\mathbf{L}^6}^2+1)\|\Delta d-f\|_{\mathbf{L}^3}^2 \non\\
& \leq &
C\Big(\|\Delta d-f\|\|\nabla(\Delta
d-f)\|+\|\Delta d-f\|^2\Big)  \non\\
&\leq& -\frac{1}{4\lambda_1}\|\nabla(\Delta d-f)\|^2+C\|\Delta
d-f\|^2,
 \eea
 and
 \bea
I_{12}&=& -\Big(\Delta d-f, f'(d)\big(
\Omega\,d-\frac{\lambda_2}{\lambda_1}Ad\big) \Big)\non\\
&\leq&
C(\|d\|_{\mathbf{L}^6}^2+1)\|d\|_{\mathbf{L}^6}\|\Delta
d-f\|_{\mathbf{L}^3}\|\nabla
v\|_{\mathbf{L}^6} \non\\
 &\leq&C\Big(\|\nabla(\Delta d-f)\|+\|\Delta
d-f\|\Big)\|\nabla v\|^{\frac{1}{2}}
\|\Delta v\|^{\frac{1}{2}} \non\\
&\leq& \mu_4^{\frac{1}{2}}\|\Delta
v\|^2+{\mu_4}^{-\frac{1}{4}}\|\nabla(\Delta d-f)\|^2+C\mathcal{A}.
 \eea
  The estimate for $I_{13}$ is exactly the same as
\eqref{EsOM1} such that
 \be I_{13}
\leq {\mu_4}^{\frac{1}{2}}\|\Delta
v\|^2+C\mu_4^{-\frac{1}{4}}\tilde{\mathcal{A}}\|\nabla(\Delta
d-f)\|^2+C\|\nabla v\|^2.\label{Esfinal}
 \ee
 Finally, for $I_{14}$, we see that
 \bea
 I_{14}&\leq& C\|\Delta d-f\|_{\mathbf{L}^3}\|v\|_{\mathbf{L}^6}\|\nabla f\|
 \non\\
 &\leq & C\|\Delta d-f\|_{\mathbf{L}^3}\|\nabla v\|(1+\|d\|^2_{\mathbf{L}^\infty})\|\nabla
 d\|\non\\
 &\leq& C(1+\|\Delta d-f\|)(\|\nabla (\Delta d-f)\|+\|\Delta d-f\|)\|\nabla
 v\|\non\\
 &\leq& \mu_4^\frac12\mathcal{A}\|\Delta
 v\|^2+\left(-\frac{1}{4\lambda_1}+\mu_4^{-\frac14}\right)\|\nabla
 (\Delta d-f)\|^2+C(1+\mu_4^{-\frac12})\mathcal{A}.\non
 \eea
Putting all the above estimates together, we arrive at the higher-order differential inequality
\eqref{higher
order energy inequality in large viscosity case}. The proof is
complete. \qed
\end{proof}
 \bl
\label{Lemma of the proof of uniform bound in large viscosity} Under
the assumption \textbf{Case I}, for any initial data $(v_0, d_0) \in
V \times \mathbf{H}^2$, if the viscosity $\mu_4$ is properly large,
i.e.,
 $$\mu_4 \geq \mu_4^0(\mu_i, \lambda_1, \lambda_2, v_0, d_0,\underline{\mu}), \quad i=1,2,3,5,6,$$
  we have
 \be
 \mathcal{A}(t) \leq C, \ \ \forall\, t \geq 0. \label{unbdA}
 \ee
  The uniform bound
$C$ is a constant depending only on $f$, $Q$, $\|v_0\|_V$,
$\|d_0\|_{\mathbf{H}^2}$, $\mu's$, $\lambda's$, $\underline{\mu}$.
  \el
\begin{proof}
 It follows from  \eqref{higher order
energy inequality in large viscosity
 case} that
 \bea
 && \frac{d}{dt}\tilde{\mathcal{A}}(t)+\left(\frac{\mu_4}{2}-C_1{\mu_4}^{\frac{1}{2}}\tilde{\mathcal{A}}(t)
\right)\|\Delta v\|^2\non\\
&&\ \
 +\left(-\frac{1}{2\lambda_1}-C_2\mu_4^{-\frac{1}{4}}\tilde{\mathcal{A}}(t)
 \right)
 \|\nabla(\Delta d-f)\|^2\non\\
 &\leq&  C_3\tilde{\mathcal{A}}(t).
 \label{higher order energy inequality in large viscosity case At}
 \eea
 Meanwhile, by \eqref{low2}, we have \be \int_{t}^{t+1}\tilde{\mathcal{A}}(\tau)d\tau \leq
\int_{t}^{t+1}\mathcal{A}(\tau)d\tau+1 \leq M, \ \ \forall\, t \geq 0,
\label{integral with length 1}
 \ee
where $M$ is a positive constant depending only on $\mu_i's$ (except
$\mu_4$), $\lambda_i's$, $\|v_0\|$, $\|d_0\|_{\mathbf{H}^1}$. Now we
choose $\mu_4$ large enough such that
 \be {\mu_4}^{\frac12}
\geq 2C_1(\tilde{\mathcal{A}}(0)+4M+C_3M)+
4\lambda_1^2C_2^2(\tilde{\mathcal{A}}(0)+4M+C_3M)^2+1. \label{large
mu 4}
 \ee
 Applying a similar
argument in \cite[Theorem 4.3]{LL01} (cf. also \cite{LL95, LWX10}),
we can see that $\tilde{\mathcal{A}}(t)$ is uniformly bounded for
all $t \geq 0$ and satisfies
 \be
 \frac{\mu_4}{2}-C_1{\mu_4}^{\frac{1}{2}}\tilde{\mathcal{A}}(t)
\geq 0,\quad
\frac{1}{-2\lambda_1}-C_2\mu_4^{-\frac{1}{4}}\tilde{\mathcal{A}}(t)
 \geq 0, \quad \forall \, t\geq 0.\label{laAb}
 \ee
The proof is complete. \qed
\end{proof}
 Next, we briefly discuss \textbf{Case II}.

  \bc
 \label{higher order energy law in large viscosity case AA}
  For $n=3$,  under the assumption \textbf{Case II},
 the inequality \eqref{higher order energy inequality in large viscosity
 case} still holds.
 \ec
 \begin{proof}
 If Parodi's relation \eqref{lam2} does not hold, i.e.,
  $ \lambda_2+(\mu_2+\mu_3) \neq 0$,  then in the derivation of $\frac{d}{dt}\mathcal{A}(t)$ (see Section \ref{cdA}),
   the first term on the right-hand side of
\eqref{expansion of laplace A d} does not cancel with the first term
on the right-hand side of \eqref{expansion of mu 2 and 3}.
Consequently, there is one extra term:
 \be
(\lambda_2+\mu_2+\mu_3)\int_Q d_jN_i\Delta A_{ij}dx.\non
 \ee
 Besides, since we no longer have \eqref{critical point of lambda 2}
 in \textbf{Case II}, we have to re-investigate the left-hand side of \eqref{NAD}.
Using the $d$ equation \eqref{e3} and integration by parts, we get
 \bea
&&(\lambda_2+\mu_2+\mu_3)\int_Q d_jN_i\Delta A_{ij}dx
\non\\
&=&\frac{\lambda_2+\mu_2+\mu_3}{\lambda_1}\int_Q d_j\nabla_l(\Delta
d_i-f_i)\nabla_l A_{ij}dx \non\\
&&\ \ +\frac{\lambda_2+\mu_2+\mu_3}{\lambda_1}
\int_Q \nabla_l d_j(\Delta d_i-f_i)\nabla_l A_{ij} dx \non\\
&&\ \ +\frac{\lambda_2(\lambda_2+\mu_2+\mu_3)}{\lambda_1}\int_Q
|d_j\nabla_l
A_{ij}|^2dx\non\\
&&\ \ +\frac{\lambda_2(\lambda_2+\mu_2+\mu_3)}{\lambda_1}\int_Q
\nabla_l d_j A_{ik}d_k \nabla_l A_{ij}dx\non\\
&&\ \ + \frac{\lambda_2(\lambda_2+\mu_2+\mu_3)}{\lambda_1}\int_Q d_j
A_{ik}\nabla_l d_k \nabla_l A_{ij}dx. \label{expansion of extra
term}
 \eea
 We now estimate the right-hand side of \eqref{expansion of extra term}. For the first term, we
have
\bea &&\frac{\lambda_2+\mu_2+\mu_3}{\lambda_1}\int_Q
d_j\nabla_l(\Delta
d_i-f_i)\nabla_l A_{ij}dx  \non\\
&\leq& C\|d\|_{\mathbf{L}^{\infty}}\|\nabla(\Delta d-f)\|\|\Delta
v\|\non\\
&\leq &
 C(\|\Delta d-f\|^{\frac{1}{2}}+1)\|\nabla(\Delta d-f)\|\|\Delta v\| \non\\
&\leq& {\mu_4}^{\frac14}(1+\|\Delta d-f\|)\|\Delta
v\|^2+\frac{C}{{\mu_4}^{\frac14}}\|\nabla(\Delta d-f)\|^2 \non\\
&\leq& {\mu_4}^{\frac12}\tilde{\mathcal{A}}\|\Delta
v\|^2+C\mu_4^{-\frac14}\|\nabla(\Delta d-f)\|^2.\label{first extra}
 \eea
 The second term
 can be estimated as  \eqref{EsOM2}, while the fourth and fifth terms are similar to \eqref{mmm56}.
Finally, concerning the third term and the two terms on the
left-hand side of \eqref{NAD}, we infer from \eqref{lama1} and
\eqref{noPa1} that
 \bea
 &&\frac{\lambda_2(\lambda_2+\mu_2+\mu_3)}{\lambda_1}-\frac{(\lambda_2)^2}{\lambda_1}-(\mu_5+\mu_6)\non\\
 & = &
 -\frac{1}{\lambda_1}\big[\lambda_1(\mu_5+\mu_6)-\lambda_2(\mu_2+\mu_3)\big]\non\\
 &<& -\frac{1}{\lambda_1}\Big[-\frac12(\lambda_2-\mu_2-\mu_3)^2-\lambda_2(\mu_2+\mu_3)\Big]\non\\
 &=&
 \frac{1}{2\lambda_1}[(\lambda_2)^2+(\mu_2+\mu_3)^2]\leq 0,\non
 \eea
 which yields
 \be
\Big[\frac{\lambda_2(\lambda_2+\mu_2+\mu_3)}{\lambda_1}-\frac{(\lambda_2)^2}{\lambda_1}-(\mu_5+\mu_6)\Big]
\int_Q |d_j\nabla_l A_{ij}|^2dx\leq 0.\non
 \ee
 Combining the other estimates in the proof of Lemma \ref{higher order energy law in large viscosity
 case}, we obtain the inequality \eqref{higher order energy inequality in large viscosity
 case} under assumption \textbf{Case II}.\qed
 \end{proof}
 \bc
\label{Lemma of the proof of uniform bound in large viscosity AA}
Under the assumption \textbf{Case II}, for any initial data $(v_0,
d_0) \in V \times \mathbf{H}_p^2$, if the viscosity $\mu_4$ is
properly large, i.e.,
 $$\mu_4 \geq \mu_4^0(\mu_i, \lambda_1, \lambda_2, v_0, d_0,\underline{\mu}),\quad i=1,2,3,5,6,$$
   we have
$\mathcal{A}(t) \leq C$ for $t\geq 0$ with $C$ being a constant
depending only on $f$, $Q$, $\|v_0\|_V$, $\|d_0\|_{\mathbf{H}^2}$, $\mu's$,
$\lambda's$ and $\underline{\mu}$.
  \ec

 \subsection{Global existence and uniqueness}

In both \textbf{Case I} and \textbf{Case II}, the uniform estimates
we have obtained in Section 4.2 are independent of the approximation
parameter $m$ and time $t$. This indicates that for both cases,
$(v_m, d_m)$ is a global solution to the approximate problem
\eqref{equation 1 in approximate system}--\eqref{I.C. in approximate
problem}:
 \bea &&v_m \in
L^{\infty}(0, +\infty; V) \cap L^2_{loc}(0, +\infty; \mathbf{H}^2_p),
\non\\
&& d_m \in L^{\infty}(0, +\infty; \mathbf{H}^2_p) \cap L^2_{loc}(0,
+\infty; \mathbf{H}^3_p),\non
 \eea
 which further implies that
 \be
 \partial_tv_m\in L^2_{loc}(0,+\infty; \mathbf{L}^2_p), \quad
 \partial_t d_m\in L^2_{loc}(0,+\infty; \mathbf{H}^1_p).\non
 \ee
The uniform estimates enable us to pass to the limit for $(v_m,
d_m)$ as $m\to \infty$. By a similar argument to \cite{LL95,SL08},
we can show that there exist a pair of limit functions $(v,d)$ satisfying
 \bea && v \in L^{\infty}(0, \infty;
V)\cap L^2_{loc}(0, +\infty; \mathbf{H}^2_p), \label{so1}\\
&& d \in
L^{\infty}(0, +\infty; \mathbf{H}^2_p)\cap L^2_{loc}(0, +\infty;
\mathbf{H}^3_p),\label{so}
 \eea
 such that $(v, d)$ is a weak solution of the system \eqref{e1}--\eqref{I.C. in nonzero case}.
 A bootstrap argument based on Serrin's result \cite{Se} and Sobolev
embedding theorems leads to the existence of classical solutions.
  The uniqueness of solutions to the problem \eqref{e1}--\eqref{I.C. in nonzero case} with
  regularity \eqref{so1}--\eqref{so} can be proved as in \cite[Lemma 2.2]{LWX10}.

In summary, we have
 \bt [Global well-posedness]
 \label{global existence of classical solution} Let $n=3$.  We assume
 that either the conditions in \textbf{Case I} or in \textbf{Case II} are satisfied.
 For any  $(v_0,
d_0)\in V\times \mathbf{H}^2_p$, under the large viscosity
assumption $$\mu_4\geq
 \mu_4^0(\mu_i, \lambda_1, \lambda_2, v_0,d_0,\underline{\mu}),\quad  i=1,2,3,5,6,$$
  the problem \eqref{e1}--\eqref{I.C. in nonzero case}
 admits a unique global solution that satisfies \eqref{so1}--\eqref{so}.
 \et
   Besides,
 we have the following continuous dependence on the initial data:
 \bl
 Suppose that the assumptions in Theorem
 \ref{global existence of classical solution} are satisfied. $(v_i,d_i)$ $(i=1,2)$ are global solutions to the problem
 \eqref{e1}--\eqref{I.C. in nonzero case} corresponding to initial data $(v_{0i}, d_{0i})\in V\times \mathbf{H}^2_p$
 $(i=1,2)$. Then for any $t\in [0,T]$, we have
\begin{eqnarray}
&& \| (v_1-v_2)(t)\|^2+\|(d_1-d_2)(t)\|_{\mathbf{H}^1}^2 \non\\
 &&
 \ \ + \int_0^t\left(\frac{\mu_4}{2}\|\nabla
 (v_1-v_2)(\tau)\|^2+\|\Delta( d_1-d_2)(\tau)\|^2\right) d\tau \non\\
 &\leq&
 2e^{Ct}(\|v_{01}-v_{02}\|^2+\|d_{01}-d_{02}\|_{\mathbf{H}^1}^2),\non
\end{eqnarray}
where $C$ is a constant depending on $\|v_{0i}\|_V,
\|d_{0i}\|_{\mathbf{H}^2}, \mu's, \lambda's$ but not on $t$.
 \el
\br \label{2Da} If in addition, we assume either
$$\text{(i)}\ \ \mu_1=0,\  \lambda_2\neq 0,\quad \ \text{or}\quad \  \text{(ii)}\ \ \mu_1\geq 0, \ \lambda_2=0,$$
 the same result
holds true in $2D$ without the largeness assumption on $\mu_4$. In
case (i), we note that the nonlinearity of the highest-order
vanishes. In particular, this applies to the system \eqref{1 for
Huan's model}--\eqref{3 for Huan's model}, which is a simplified
version of the general Ericksen--Leslie model (cf. \cite{SL08} for
the liquid crystal system with rod-like molecules and
\cite{CR,GW3,LWX10} with general ellipsoid shape). On the other
hand, in case (ii), one can apply the maximum principle for $d$ to
obtain its $\mathbf{L}^\infty$-bound, which makes the proof much
easier (cf. \cite{LL01}). \qed
 \er
 \subsection{Long-time behavior: convergence to equilibrium}
Now we briefly discuss the long-time behavior of the global solution
$(v,d)$ obtained in Theorem \ref{global existence of classical
solution}. First, we have the following decay property:
 \bl \label{vcon} For the global
solutions obtained in Theorem \ref{global existence of classical
solution}, we have
 \be
\lim_{t\rightarrow +\infty} (\|v(t)\|_{V}+ \|-\Delta
 d(t)+f(d(t))\|)=0. \label{vcon1}
 \ee
 \el
 \begin{proof}
 We only consider \textbf{Case I} and the proof for \textbf{Case II}
 is similar. From the basic energy law \eqref{basic energy law at the critical point of lambda
 2}, we see that $\mathcal{A}(t)\in L^1(0,+\infty)$. On the
 other hand,  \eqref{higher order energy inequality in large viscosity
 case} together with \eqref{unbdA} and \eqref{laAb} implies that
 $\frac{d}{dt}\mathcal{A}(t)\leq C$. As a consequence,
 $$\lim_{t\to+\infty}\mathcal{A}(t)=0.$$
 The proof is complete.
 \qed
 \end{proof}

  It easily follows from Lemma \ref{vcon} that
 \begin{proposition} \label{lim}
  Suppose that the assumptions in Theorem
 \ref{global existence of classical solution} are satisfied.
 The $\omega$-limit set of $(v_0,d_0)\in V\times \mathbf{H}^2_p$
 denoted by
 $\omega(v_0,d_0)$ is a non-empty bounded connected subset in $V\times
 \mathbf{H}^2_p$, which is also compact in $\mathbf{L}^2 \times \mathbf{H}^1_p$. Besides, we have
 $$\omega(v_0,d_0)\in \mathcal{S}:=\{(0,d): -\Delta d+ f(d)=0,\
 \text{in} \ Q, \ d(x+e_i)=d(x)\ \text{on}\ \partial Q \}.$$
 \end{proposition}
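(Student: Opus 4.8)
The plan is to establish the three asserted properties of the $\omega$-limit set---nonemptiness/boundedness/connectedness, compactness in $\mathbf{L}^2\times\mathbf{H}^1_p$, and the characterization via the stationary set $\mathcal{S}$---by combining the uniform bounds already secured in Lemma \ref{Lemma of the proof of uniform bound in large viscosity} with the decay statement of Lemma \ref{vcon}. First I would recall the standing setup: by Theorem \ref{global existence of classical solution} the solution exists globally and by \eqref{unbdA} the quantity $\mathcal{A}(t)=\|\nabla v(t)\|^2+\|\Delta d(t)-f(d(t))\|^2$ is bounded uniformly in $t\geq 0$; together with the low-order bound \eqref{low1} this gives a uniform bound on $\|v(t)\|_V+\|d(t)\|_{\mathbf{H}^2_p}$. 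Hence the trajectory $\{(v(t),d(t)):t\geq 0\}$ is a bounded subset of $V\times\mathbf{H}^2_p$, and the definition of the $\omega$-limit set as the set of all cluster points of $(v(t_n),d(t_n))$ for sequences $t_n\to+\infty$ immediately makes $\omega(v_0,d_0)$ a bounded subset of that space.

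For nonemptiness and compactness I would exploit the compact embedding $V\times\mathbf{H}^2_p\hookrightarrow\mathbf{L}^2\times\mathbf{H}^1_p$ (Rellich--Kondrachov on the torus). Since $\{(v(t),d(t))\}$ is bounded in $V\times\mathbf{H}^2_p$, it is precompact in $\mathbf{L}^2\times\mathbf{H}^1_p$; therefore any sequence $t_n\to+\infty$ admits a subsequence along which $(v(t_n),d(t_n))$ converges strongly in $\mathbf{L}^2\times\mathbf{H}^1_p$, so $\omega(v_0,d_0)$ is nonempty and, being a closed bounded subset that is the set of cluster points, compact in $\mathbf{L}^2\times\mathbf{H}^1_p$. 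Connectedness is the standard dynamical-systems fact: the $\omega$-limit set of a single continuous trajectory on a connected time interval $[0,+\infty)$ is connected, which follows because if it split into two disjoint closed sets the continuity of $t\mapsto(v(t),d(t))$ in $\mathbf{L}^2\times\mathbf{H}^1_p$ would force the trajectory to leave every neighborhood of one piece infinitely often while staying near it, contradicting that both are limit sets.

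The characterization $\omega(v_0,d_0)\subset\mathcal{S}$ is where Lemma \ref{vcon} does the real work. That lemma gives $\lim_{t\to+\infty}\big(\|v(t)\|_V+\|-\Delta d(t)+f(d(t))\|\big)=0$. Thus along any sequence $t_n\to+\infty$ with $(v(t_n),d(t_n))\to(v_\infty,d_\infty)$ in $\mathbf{L}^2\times\mathbf{H}^1_p$, the $V$-norm decay forces $v_\infty=0$, and the decay of $\|-\Delta d(t)+f(d(t))\|$ forces $\Delta d_\infty-f(d_\infty)=0$ in the limit. The one point demanding slight care---and the step I expect to be the main (if modest) obstacle---is passing the equation $-\Delta d+f(d)=0$ to the limit, since $\mathbf{H}^1_p$-convergence alone does not control $\Delta d$; here I would use that $-\Delta d(t_n)+f(d(t_n))\to 0$ strongly in $\mathbf{L}^2$ by Lemma \ref{vcon}, that $f(d(t_n))\to f(d_\infty)$ (using the uniform $\mathbf{L}^\infty$-bound on $d$ from \eqref{Agmon} and \eqref{unbdA} together with strong $\mathbf{H}^1_p$ convergence), and the uniform $\mathbf{H}^2_p$-bound to extract a weak $\mathbf{H}^2_p$-limit of $d(t_n)$ which must coincide with $d_\infty$; this identifies $\Delta d_\infty$ as an $\mathbf{L}^2$ function and yields $-\Delta d_\infty+f(d_\infty)=0$. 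The periodicity $d_\infty(x+e_i)=d_\infty(x)$ is inherited from the periodic function space, so $(0,d_\infty)\in\mathcal{S}$, completing the proof.
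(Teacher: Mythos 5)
Your proposal is correct and follows essentially the same route as the paper, which simply states that the proposition ``easily follows from Lemma \ref{vcon}'': uniform $V\times\mathbf{H}^2_p$ bounds from Lemma \ref{Lemma of the proof of uniform bound in large viscosity} plus the compact embedding give nonemptiness, boundedness, connectedness and compactness, while the decay \eqref{vcon1} identifies the limit points with $\mathcal{S}$. Your extra care in passing $-\Delta d+f(d)=0$ to the limit via the weak $\mathbf{H}^2_p$ limit is exactly the right way to fill in the detail the paper leaves implicit.
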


  Therefore, all asymptotic limit points of the system
 \eqref{e1}--\eqref{e3} satisfy the following reduced stationary problem
 \bea
 v_\infty&=&0,\\
 \nabla P_\infty+\nabla\left(\frac{|\nabla d_\infty|^2}{2}\right)&=&-\nabla d_\infty\cdot \Delta d_\infty,\label{s1a}\\
 -\Delta d_\infty+f(d_\infty)&=&0,\label{s2a}\\
 d_\infty(x)&=&d_\infty(x+e_i),\quad x\in \partial Q,\label{sbd}
 \eea
where in \eqref{s1a}, we used the well-known fact that (cf. \cite{LL95})
$$\nabla\cdot(\nabla
d_\infty\odot\nabla d_\infty)=\nabla\left(\frac{|\nabla
d_\infty|^2}{2}\right) + \nabla d_\infty\cdot \Delta d_\infty.$$
\eqref{s1a} is a constraint equation for $d_\infty$. Since $\mathcal{F}'(d)=f(d)$, if $d_\infty$ is a solution to \eqref{s2a},
then \eqref{s1a} is automatically satisfied because all gradients
can be absorbed into the pressure.

We have already proved that the velocity field $v$ decays to zero in $V$
as $t\nearrow +\infty$ (cf. Lemma \ref{vcon}). On the other
hand, we can only conclude \emph{sequential convergence} for $d$ from
compactness of the trajectory: for any unbounded sequence $\{t_j\}$,
there exist a subsequence $\{t_n\}\nearrow + \infty $ such that
 \be \lim_{t_n\rightarrow +\infty} \|d(t_n)-d_\infty\|_{\mathbf{H}^1}
   =0, \label{secon}
   \ee
 where $d_\infty$ satisfies \eqref{s2a}--\eqref{sbd}.
The convergence of $d$ for the whole time sequence is non-trivial
because in general we cannot expect the uniqueness of critical
points of $E(d)$. In the present case, under the periodic boundary
conditions, one may see that the dimension of the set of stationary
solutions is at least $n$. This is because a shift in each variable
may give another steady state. The convergence of $d$ to a single
equilibrium can be achieved by using the well-known \L
ojasiewicz--Simon approach (cf. L. Simon \cite{S83}). We refer to
\cite{Hu} and the references therein for various generalizations and
applications. To this end, we introduce a suitable \L
 ojasiewicz--Simon type inequality in the periodic setting (cf. e.g., \cite{Hu}).
 \bl[\L ojasiewicz--Simon inequality] \label{ls}
  Let $\psi$ be a critical point of the functional
 $$
 E(d)=\frac12\|\nabla d\|^2 + \int_Q \mathcal{F}(d)dx.$$ Then there exist constants
 $\theta\in(0,\frac12)$ and $\beta>0$ depending on $\psi$ such that
 for any $d\in \mathbf{H}^1_p$ satisfying $\|d-\psi\|_{\mathbf{H}^1}<\beta$, it
 holds
 \be
 \|-\Delta d+f(d)\|_{(\mathbf{H}^1_p)'}\geq
 |E(d)-E(\psi)|^{1-\theta},\label{lsa}
 \ee
 where $(\mathbf{H}^1_p)'$ is the dual space of $\mathbf{H}^1_p$.
 \el
 Then we have the following convergence result:
\bt [Convergence to equilibrium] \label{vconr} Under the assumptions of Theorem \ref{global
existence of classical solution}, the global solution $(v, d)$
has the following property:
 \be
 \|v(t)\|_{V}+\|d(t)-d_\infty\|_{\mathbf{H}^2}\leq C(1+t)^{-\frac{\theta}{(1-2\theta)}}, \quad \forall\ t \geq
 0,\label{rate}
 \ee
 where $d_\infty$ is a solution to \eqref{s2a}--\eqref{sbd}, $C$ is a constant depending on
 $v_0$, $d_0$, $f$, $Q$, $\mu_i's$, $\lambda_i's$, $d_\infty$ and the constant $\theta \in
 (0,\frac12)$ depends on $d_\infty$ (called \L ojasiewicz exponent, cf. Lemma \ref{ls}).
 \et
 Based on Lemma \ref{ls},
 the basic energy law deduced in Section 2 (cf. \eqref{basic energy law at the critical point of lambda 2} or \eqref{belc}) and the higher-order energy
 inequality (cf. Lemma \ref{higher order energy law in large viscosity case}
 or
 Corollary \ref{higher order energy law in large viscosity case AA}), we can prove Theorem \ref{vconr} following  the procedure in \cite[Section 3.2, 3.3]{LWX10} with minor modifications. In order not to make the paper too lengthy, we leave the  details to interested readers.

\section{Well-posedness and Nonlinear Stability under Parodi's Relation }
\setcounter{equation}{0}

The results obtained in Section \ref{Larmu4} indicate that for both
\textbf{Case I} (with Parodi's relation) and \textbf{Case II}
(without Parodi's relation), global well-posedness of the Ericksen--Leslie system can be obtained provided that
the viscosity $\mu_4$ is properly large. Recall the Navier--Stokes equations in $3D$ (with periodic boundary
conditions and $v_0\in H$), we can easily derive
 \be
 \frac{d}{dt}\|\nabla
 v\|^2+\Big(\frac12 \mu_4-{\mu_4}^\frac12\|\nabla v\|^2\Big)\|\Delta
 v\|^2\leq C\mu_4^{-\frac{11}{2}}\|\nabla v\|^2,\non
 \ee
 which implies that the large viscosity assumption is equivalent to small initial
 data assumption on $v$
in $\mathbf{H}^1$-norm. However, this is not the case for the Ericksen--Leslie system
\eqref{e1}--\eqref{I.C. in nonzero case} due to its much more complicated structure (cf. \eqref{large mu 4}). Actually, we do not have the
large viscosity/small initial data alternative relation even for
those simplified liquid crystal systems \cite{LL95,SL08}.

 In this section, we show that Parodi's relation
\eqref{lam2} plays an important role in the well-posedness and
stability of the system \eqref{e1}--\eqref{I.C. in nonzero case}, if
no additional requirement is imposed on the viscosity $\mu_4$. In
particular, under those assumptions in \textbf{Case I}, we are able
to prove a suitable higher-order energy inequality that yields the
local well-posedness and furthermore, the global existence result
provided that the initial velocity $v_0$ is near zero and the
initial director $d_0$ is close to a \textit{local} minimizer
$d^\ast$ of the elastic energy
 \be E(d)=\frac12\|\nabla
d\|^2+\int_{Q}\mathcal{F}(d)dx. \label{ElaE}
 \ee
 Besides, we are able to
show the Lyapunov stability of \emph{local} energy minimizers of
$E(d)$. This implies that Parodi's relation \eqref{lam2}
serves as a sufficient condition for nonlinear stability of the Ericksen--Leslie system \eqref{e1}--\eqref{I.C. in nonzero case} from the mathematical point of view.

\subsection{Higher-order energy inequality and local well-posedness}
 \bl
 \label{HOEL}
 Let $n=3$. Suppose that the conditions in \textbf{Case I} are satisfied.
  Then the following higher-order energy inequality holds:
 \bea
&&
\frac{d}{dt}\mathcal{A}(t)+\frac{\mu_1}{2}\int_{Q}(d_kd_p\nabla_lA_{kp})^2dx+\frac{\mu_4}{8}\|\Delta
v\|^2-\frac{1}{8\lambda_1}\|\nabla(\Delta
d-f)\|^2\non\\
&\leq& C_*(\mathcal{A}^6(t)+\mathcal{A}(t)),
 \label{Eins}
\eea
 where $C_{\ast}$ is a constant that only depends on $\mu's$,
$\lambda's$, $\|v_0\|$ and $\|d_0\|_{\mathbf{H}^1}$.
 \el
\begin{proof}
 First, from the basic energy law \eqref{basic energy law at
the critical point of lambda 2} we still have the uniform estimates
on $\|v(t)\|$ and $\|d(t)\|_{\mathbf{H}^1}$ (cf. \eqref{low1}).
Moreover, estimates \eqref{Agmon}--\eqref{how to control nabla delta
d 2} are still valid. Next, we re-estimate the terms
$I_1,...,I_{14}$ on the right-hand side of \eqref{second time
expansion of derivative of A(t)}.
 \bea I_1
 &\leq& \frac{\mu_1}{4}\int_{Q}(d_kd_p\nabla_lA_{kp})^2dx+
C\|d\|_{\mathbf{L}^\infty}^2\|\nabla v\|^2_{\mathbf{L}^3}\|\nabla d\|_{\mathbf{L}^6}^2. \non\\
&\leq& \frac{\mu_1}{4}\int_{Q}(d_kd_p\nabla_lA_{kp})^2dx+
C(\|\Delta d-f\|^3+1)\|\nabla v\|\|\Delta v\| \non\\
&\leq& \frac{\mu_1}{4}\int_{Q}(d_kd_p\nabla_lA_{kp})^2dx+\frac{\mu_4}{32}\|\Delta v\|^2\non\\
&&\ \
+C(\|\nabla v\|^2\|\Delta d-f\|^{6}+\|\nabla v\|^{2})         \non\\
&\leq& \frac{\mu_1}{4}\int_{Q}(d_kd_p\nabla_lA_{kp})^2dx+\frac{\mu_4}{32}\|\Delta v\|^2+C\mathcal{A}^4+C\mathcal{A},
 \label{first term concerning mu 1}
 \eea
  \bea I_2
&\leq& \frac{\mu_1}{4}\int_{Q}(d_kd_p\nabla_lA_{kp})^2dx+
C\|d\|_{\mathbf{L}^\infty}^2\|\nabla v\|^2_{\mathbf{L}^3}\|\nabla
d\|_{\mathbf{L}^6}^2\non\\
&& \ \
+C\|d\|_{\mathbf{L}^\infty}^3\|\nabla v\|^2_{\mathbf{L}^4}\|\Delta d\|   \non\\
&\leq& \frac{\mu_1}{4}\int_{Q}(d_kd_p\nabla_lA_{kp})^2dx+\frac{\mu_4}{32}\|\Delta
v\|^2+C\mathcal{A}^4+C\mathcal{A}\non\\
&& \ \ +C(\|\Delta
d-f\|^\frac52+1)\|\nabla v\|^\frac12\|\Delta v\|^\frac32   \non\\
&\leq&
\frac{\mu_1}{4}\int_{Q}(d_kd_p\nabla_lA_{kp})^2dx+\frac{\mu_4}{16}\|\Delta
v\|^2+C\mathcal{A}^6+C\mathcal{A},\non
 \eea
 \bea
I_3+I_4 &\leq& C\|\Delta v\|\|\nabla v\|_{\mathbf{L}^3}\|\nabla d\|_{\mathbf{L}^6}\|d\|_{\mathbf{L}^\infty}\non\\
& \leq&  \frac{\mu_4}{64}\|\Delta v\|^2+ C\|\nabla v\|_{\mathbf{L}^3}^2\|\nabla d\|_{\mathbf{L}^6}^2\|d\|_{\mathbf{L}^\infty}^2  \non\\
&\leq& \frac{\mu_4}{32}\|\Delta v\|^2+C\mathcal{A}^4+C\mathcal{A},
\label{estimate of mu 5 plus mu 6 in small initial data}
 \eea
 \bea
I_5&\leq&
C\|\nabla(\Delta d-f)\|\|\nabla v\|^{\frac12}\|\Delta
v\|^{\frac12}(\|\Delta d-f\|+1 )
\non\\
&\leq& \frac{\mu_4}{32}\|\Delta
v\|^2-\frac{1}{8\lambda_1}\|\nabla(\Delta d-f)\|^2+C\|\nabla
v\|^2(\|\Delta d-f\|^4+1)    \non\\
&\leq& \frac{\mu_4}{32}\|\Delta
v\|^2-\frac{1}{8\lambda_1}\|\nabla(\Delta
d-f)\|^2+C\mathcal{A}^3+C\mathcal{A}, \label{estimate of omega
multiplying nabla d in small initial data}
\eea
\bea
I_6&\leq&C\|\Delta v\|\|\Delta d-f\|(\|(\nabla (\Delta d-f)\|^\frac34+1)  \non\\
 &\leq& \frac{\mu_4}{32}\|\Delta
v\|^2-\frac{1}{8\lambda_1}\|\nabla(\Delta
d-f)\|^2+C\mathcal{A}^4+C\mathcal{A}. \label{estimate of nabla omega
multiplying nabla d in small data}
 \eea
For the terms $K_1,...,K_4$ in \eqref{NAd1}, we still have
\eqref{NAD}. By a similar argument to \eqref{estimate of omega
multiplying nabla d in small initial data}--\eqref{estimate of nabla
omega multiplying nabla d in small data} we get
 \bea
K_1 &\leq& \frac{\mu_4}{32}\|\Delta
v\|^2-\frac{1}{8\lambda_1}\|\nabla(\Delta
d-f)\|^2+C\mathcal{A}^4+C\mathcal{A},\non\\
 K_2 &\leq& \frac{\mu_4}{32}\|\Delta v\|^2
-\frac{1}{8\lambda_1}\|\nabla(\Delta
d-f)\|^2+C\mathcal{A}^3+C\mathcal{A}.\non
  \eea
Using integration by parts, we can see that
   \bea K_3&=&-\frac{2(\lambda_2)^2}{\lambda_1}\int_Q \nabla_lA_{ij}
   d_j A_{ij} \nabla_l d_j dx \non\\
   &=& \frac{2(\lambda_2)^2}{\lambda_1}\int_Q |A_{ij}\nabla_l
   d_j|^2dx-K_3+\frac{2(\lambda_2)^2}{\lambda_1}\int_Q A_{ij}d_j
   A_{ij}\Delta d_j dx,\non
 \eea
which together with similar estimates in \eqref{estimate of mu 5
plus mu 6 in small initial data} yields that
 \bea
 K_3&=& \frac{(\lambda_2)^2}{\lambda_1}\int_Q |A_{ij}\nabla_l
   d_j|^2dx+\frac{(\lambda_2)^2}{\lambda_1}\int_Q A_{ij}d_j
   A_{ij}\Delta d_j dx\non\\
   &\leq& \frac{\mu_4}{32}\|\Delta
   v\|^2+C\mathcal{A}^4+C\mathcal{A}.\non
 \eea
Hence,
 \bea && I_7+I_8+I_9 \non\\
 &\leq& \frac{3\mu_4}{32}\|\Delta v\|^2
-\frac{1}{4\lambda_1}\|\nabla(\Delta
d-f)\|^2+(\mu_5+\mu_6)\int_{Q}|\nabla_l A_{ij} d_j|^2dx\non\\
&& +C\mathcal{A}^4+C\mathcal{A}.  \non
 \eea
The remaining terms can be estimated in a straightforward way.
 \bea
 I_{10} &\leq& |(\Delta v, v\cdot \nabla v)| \leq C\|\Delta
v\|^\frac74\|\nabla v\|\non\\
&\leq & \frac{\mu_4}{32}\|\Delta v\|^2+C\|\nabla
v\|^8, \non
\eea
\bea
I_{11}& \leq& C(\|d\|_{\mathbf{L}^6}^2+1)\|\Delta
d-f\|_{\mathbf{L}^3}^2 \non\\
& \leq & -\frac{1}{8\lambda_1}\|\nabla(\Delta
d-f)\|^2+C\mathcal{A},\non
\eea
\bea
I_{12} & \leq&
C\|f'(d)d\|\|\Delta d-f\|_{\mathbf{L}^6}\|\nabla v\|_{\mathbf{L}^3}  \non\\
&\leq& C\|\nabla v\|^{\frac12}\|\Delta
v\|^{\frac12}\Big(\|\nabla(\Delta d-f)\|+\|\Delta d-f\| \Big)
\non\\
&\leq& \frac{\mu_4}{32}\|\Delta
v\|^2-\frac{1}{8\lambda_1}\|\nabla(\Delta d-f)\|^2+C\mathcal{A}.\non
 \eea
The estimate of $I_{13}$ is similar to \eqref{estimate of omega
multiplying nabla d in small initial data}:
 \be
I_{13} \leq -\frac{1}{8\lambda_1}\|\nabla(\Delta
d-f)\|^2+C\mathcal{A}^3+C\mathcal{A}.\non
 \ee
 For the last term $I_{14}$, we have
 \bea
 I_{14}&\leq& C\|\Delta d-f\|_{\mathbf{L}^3}\|v\|_{\mathbf{L}^6}\|\nabla f\|
 \non\\
 &\leq& C(1+\|\Delta d-f\|)(\|\nabla (\Delta d-f)\|+\|\Delta d-f\|)\|\nabla
 v\|\non\\
 &\leq& -\frac{1}{8\lambda_1}\|\nabla(\Delta
d-f)\|^2+C\mathcal{A}^2+C\mathcal{A}.\non
 \eea
Collecting all the estimates above, we can conclude the higher-order differential inequality \eqref{Eins}. The proof is complete. \qed
\end{proof}
The following local well-posedness result is a direct consequence of
the higher-order energy inequality \eqref{Eins}:
\bt[Local well-posedness]
 \label{locals solution} Let $n=3$. Suppose that the conditions in \textbf{Case I} are satisfied.
 For any  $(v_0,
d_0)\in V\times \mathbf{H}^2_p$, there exists a $T^*>0$ such that
the problem \eqref{e1}--\eqref{I.C. in nonzero case} admits a unique
local solution satisfying
 $$ v \in L^{\infty}(0, T^*; V)\cap L^2(0,
T^*; \mathbf{H}^2_p),\quad d \in L^{\infty}(0, T^*; \mathbf{H}^2_p)\cap
L^2(0, T^*; \mathbf{H}^3_p).$$
 \et

\br
 Unfortunately, we are not able to prove a corresponding local well-posedness result under the assumptions in
 \textbf{Case II} where Parodi's relation \eqref{lam2} is \textit{not} satisfied. In this case the higher-order energy inequality
 \eqref{Eins} is not available any longer. One obvious difficulty is that we
 lose control of some higher-order nonlinearities that will vanish due to
specific cancellations under Parodi's relation
  (see, e.g., \eqref{first extra}). \qed
\er

\subsection{Near local minimizers: well-posedness and nonlinear stability }

Based on Lemma \ref{HOEL}, one can easily deduce the following
property:

\begin{proposition}
 \label{sm}
 Suppose  that the assumptions in \textbf{Case I} are satisfied. For any $(v_0, d_0)\in V\times \mathbf{H}^2_p$,
 if
 \be
 \|\nabla v\|^2(0)+\|\Delta
d-f(d)\|^2(0)\leq R,\label{R}
 \ee
 where $R>0$ is a constant, there exists a positive constant
$\varepsilon_0$ depending on $\mu's$, $\lambda's$, $\|v_0\|$,
$\|d_0\|_{\mathbf{H}^1}$, $f$, $Q$ and $R$, such that the following
property holds: for the (unique) local solution $(v,d)$ of the
system \eqref{e1}--\eqref{I.C. in nonzero case} which exists on
$[0,T^*]$, if
 $$\mathcal{E}(t) \geq \mathcal{E}(0)-\varepsilon_0, \quad
\forall\, t\in[0,T^*],$$
 then the local solution $(v,d)$ can be
extended beyond $T^*$.
\end{proposition}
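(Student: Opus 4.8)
The plan is to convert the hypothesis that the energy $\mathcal{E}$ is nearly conserved into smallness of the time integral $\int_0^{T^*}\mathcal{A}(\tau)\,d\tau$, and then to feed this smallness into the higher-order differential inequality of Lemma~\ref{HOEL} through a continuity (bootstrap) argument, producing a bound on $\mathcal{A}(t)$ that is uniform on $[0,T^*]$. Since $\mathcal{A}$ controls precisely the norms entering the continuation criterion for the local solution of Theorem~\ref{locals solution}, such a bound lets the solution be restarted at $T^*$.

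First I would exploit the basic energy law \eqref{basic energy law at the critical point of lambda 2}. Under \textbf{Case I} every term on its right-hand side has the correct sign: $\mu_1|d^TAd|^2\geq 0$, $\frac{\mu_4}{2}\|\nabla v\|^2>0$, $\frac{1}{\lambda_1}\|\Delta d-f\|^2<0$ since $\lambda_1<0$, and $(\mu_5+\mu_6+\frac{(\lambda_2)^2}{\lambda_1})\|Ad\|^2\geq 0$ by \eqref{critical point of lambda 2}. Discarding the nonnegative dissipative contributions gives
\[
-\frac{d}{dt}\mathcal{E}(t)\geq c\,\mathcal{A}(t),\qquad c=\min\Big\{\frac{\mu_4}{2},\ \frac{1}{|\lambda_1|}\Big\}>0 .
\]
Integrating and invoking $\mathcal{E}(t)\geq \mathcal{E}(0)-\varepsilon_0$ yields
\[
c\int_0^{t}\mathcal{A}(\tau)\,d\tau\leq \mathcal{E}(0)-\mathcal{E}(t)\leq \varepsilon_0,\qquad \forall\,t\in[0,T^*],
\]
so that $\int_0^{T^*}\mathcal{A}(\tau)\,d\tau\leq \varepsilon_0/c$, which is as small as we wish by taking $\varepsilon_0$ small.

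Next I would simplify Lemma~\ref{HOEL}: the three quantities added on the left of \eqref{Eins}, namely $\frac{\mu_1}{2}\int_{Q}(d_kd_p\nabla_lA_{kp})^2\,dx$, $\frac{\mu_4}{8}\|\Delta v\|^2$ and $-\frac{1}{8\lambda_1}\|\nabla(\Delta d-f)\|^2$, are all nonnegative, whence $\frac{d}{dt}\mathcal{A}(t)\leq C_*\big(\mathcal{A}^6(t)+\mathcal{A}(t)\big)$. The heart of the proof is a continuity argument taming the superlinear term. Set $M=2R$ (note $\mathcal{A}(0)\leq R<M$) and let $T^{**}$ be the supremum of times in $[0,T^*]$ on which $\mathcal{A}\leq M$; by the (absolute) continuity of $\mathcal{A}$ guaranteed by the regularity of the local solution, $T^{**}>0$. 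On $[0,T^{**}]$ one has $\mathcal{A}^6\leq M^5\mathcal{A}$, hence $\frac{d}{dt}\mathcal{A}\leq C_*(M^5+1)\mathcal{A}$; integrating and using the integral bound above,
\[
\mathcal{A}(T^{**})\leq \mathcal{A}(0)+C_*(M^5+1)\int_0^{T^{**}}\mathcal{A}(\tau)\,d\tau\leq R+C_*\big((2R)^5+1\big)\frac{\varepsilon_0}{c}.
\]
Choosing $\varepsilon_0<cR/[C_*((2R)^5+1)]$ forces $\mathcal{A}(T^{**})<2R=M$, contradicting the maximality of $T^{**}$ unless $T^{**}=T^*$. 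Therefore $\mathcal{A}(t)\leq 2R$ on all of $[0,T^*]$, with a bound independent of $T^*$.

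Finally I would turn this into the continuation statement. The bound $\mathcal{A}\leq 2R$ controls $\|v\|_V$ (equivalent to $\|\nabla v\|$), and together with the uniform $\mathbf{H}^1$-bound on $d$ from \eqref{low1} and $\|f(d)\|\leq C(1+\|d\|_{\mathbf{H}^1}^3)\leq C$, it controls $\|\Delta d\|\leq \|\Delta d-f\|+\|f(d)\|$, hence $\|d\|_{\mathbf{H}^2}$, uniformly up to $T^*$. Since the existence time in Theorem~\ref{locals solution} depends only on $\|v_0\|_V+\|d_0\|_{\mathbf{H}^2}$, restarting the system near $T^*$ from this uniformly bounded data produces a solution over an additional interval of fixed length, extending $(v,d)$ beyond $T^*$. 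The delicate point, and the main obstacle, is the self-consistent choice of the threshold $M=2R$ and of $\varepsilon_0$ (which depends on $R$ through $M^5$): a naive Gronwall estimate applied to $\frac{d}{dt}\mathcal{A}\leq C_*\mathcal{A}^6+C_*\mathcal{A}$ would permit finite-time blow-up, and only the energy-dissipation-driven smallness of $\int_0^{T^*}\mathcal{A}$ closes the loop.
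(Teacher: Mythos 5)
Your proof is correct, and it takes a genuinely different route from the paper's. The paper keeps the higher-order inequality \eqref{Eins} in its raw superlinear form, compares $\mathcal{A}$ with the solution $Y$ of $Y'=C_*(Y^6+Y)$, $Y(0)=R$, to get a guaranteed existence length $t_0=\tfrac34 T_{max}(R,C_*)$, and then uses the smallness of $\int_0^{T^*}\big(\tfrac{\mu_4}{2}\|\nabla v\|^2-\tfrac{1}{\lambda_1}\|\Delta d-f\|^2\big)dt\le\varepsilon_0$ only through a mean-value (pigeonhole) argument on an interval of length $t_0/3$ to locate a single time $t_*$ near $T^*$ with $\mathcal{A}(t_*)\le R$; the ODE comparison is then restarted from $t_*$, which pushes the solution a fixed distance $\tfrac23 t_0$ past $T^*$. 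You instead convert the energy hypothesis into the pointwise bound $-\frac{d}{dt}\mathcal{E}\ge c\,\mathcal{A}$ and hence $\int_0^{T^*}\mathcal{A}\le\varepsilon_0/c$, and then close a continuity argument at threshold $M=2R$ by integrating $\mathcal{A}'\le C_*(M^5+1)\mathcal{A}$ directly (rather than via Gronwall), obtaining the uniform bound $\mathcal{A}\le 2R$ on all of $[0,T^*]$. Your version yields a stronger intermediate conclusion (a uniform bound on the whole interval, with an explicit $\varepsilon_0$), which is essentially the property the paper later has to re-extract when iterating this proposition in Theorem \ref{Main theorem III}; the paper's version is slightly more economical in that it never needs the lower bound $-\mathcal{E}'\ge c\mathcal{A}$ and only controls $\mathcal{A}$ at one restart time. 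One small point: your final continuation step tacitly uses that the local existence time in Theorem \ref{locals solution} depends only on $\|v_0\|_V+\|d_0\|_{\mathbf{H}^2}$; this is true, but it is itself supplied by the ODE-comparison construction that the paper places inside this very proof, so it is worth stating explicitly rather than citing the theorem as a black box.
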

\begin{proof}
 We consider the following initial
value problem of an ordinary differential equation:
 \be
 \frac{d}{dt}Y(t)=C_*(Y(t)^6+Y(t)),\quad
Y(0)= R\geq \mathcal{A}(0).\label{ODE}
 \ee
 We denote by
$I=[0,T_{max})$ the maximal existence interval of $Y(t)$ such that
$$ \lim_{t\rightarrow T_{max}^-} Y(t)=+\infty.$$
 It follows from the comparison principle that for any $t\in I$, $0\leq \mathcal{A}(t)\leq Y(t)$.
 Consequently, $\mathcal{A}(t)$ exists on $I$. We note that
 $T_{max}$ is determined by $Y(0)=R$ and $C_*$ such that $T_{max}=T_{max}(R,C_*)$ is
 increasing as $R$ decreases. Taking $$t_0=\frac34 T_{max}(R, C_*)> 0,$$
  then
 we have
 \be
 0\leq \mathcal{A}(t)\leq Y(t)\leq K, \quad \forall\, t\in [0,
 t_0],\label{uniK}
 \ee
 where $K$ is a constant that only
depends on $R, C_*, t_0$. This fact combined with the Galerkin
approximate scheme in Section 4.1 leads to the local existence of a
unique solution to the system \eqref{e1}--\eqref{I.C. in nonzero
case} at least on $[0,t_0]$. (This indeed provides a proof of Theorem
\ref{locals solution}.)

The above argument suggests that the existing time $T^*\geq t_0$.
Now if $ \mathcal{E}(t) \geq \mathcal{E}(0)-\varepsilon_0$ for all
$t\in[0,T^*]$, we infer from Lemma \ref{BEL} that
 \[\int_0^{T^*}\int_{Q}\Big(\frac{\mu_4}{2}|\nabla v(t)|^2-\frac{1}{\lambda_1}|\Delta d(t)-f(d(t))|^2\Big)\,dxdt
 \leq \varepsilon_0.
  \]
  Hence, there exists a
$t_* \in [T^*-\frac{t_0}{3}, T^*]$ such that
 \[ \|\nabla v(t_*)\|^2+\|\Delta
d(t_*)-f(d(t_*))\|^2 \leq \max\Big\{\frac{2}{\mu_4},
-\lambda_1\Big\}\frac{3\varepsilon_0}{t_0}.
 \] Choosing $\varepsilon_0>0$  such that
 \be
 \max\Big\{\frac{2}{\mu_4},
-\lambda_1\Big\}\frac{3\varepsilon_0}{t_0}= R,\label{epsilon}
 \ee
 we have
$\mathcal{A}(t_*)\leq R$. Taking $t_*$ as the initial time and
$Y(t_*)=R$ in \eqref{ODE}, we infer from the above argument that
$Y(t)$ (and thus $\mathcal{A}(t)$) is uniformly bounded at least on
$[0,t_*+t_0]\supset [0, T^*+\frac23 t_0]$. Thus, we can extend the
local solution $(v, d)$ from $[0, T^*]$ to $[0, T^*+\frac23 t_0]$. The proof is complete.
\qed
\end{proof}

\br

Proposition \ref{sm} implies that, for the local solution $(v,d)$ of
\eqref{e1}--\eqref{I.C. in nonzero case}, if the total energy
$\mathcal{E}(t)$ does not drop too much on its existence interval
$[0,T^*]$, then it can be extended beyond $T^*$. We note that
stronger results have been obtained in \cite{LL95,LL01} for
simplified liquid crystal systems. In those cases,  global
existence of weak solutions can be proved and the total energy
$\mathcal{E}(t)$ is well-defined on $[0,+\infty)$. Then one can show
the alternative relation: either there exists a $T<+\infty$ such
that $\mathcal{E}(T) <\mathcal{E}(0)-\varepsilon_0$ or the system
admits a (unique) global strong solution.\qed
 \er

 \br It is easy to verify that the above hypothesis on the changing
rate of $\mathcal{E}(t)$ can be fulfilled, if the initial velocity
$v_0$ is near zero and the initial molecule director $d_0$ is close
to an \emph{absolute} minimizer of the elastic energy $E(d)$ (for instance, a constant vector with unit length). We
refer to \cite{LL95,LL01,LW08} for the cases of the simplified
liquid crystal system. The same result holds for our current general
case if the same assumption is imposed.\qed
 \er

The assumption that the initial director  $d_0$ is close to an
\emph{absolute} energy minimizer can indeed be improved. Under Parodi's
relation \eqref{lam2}, we can show much stronger result that if
$v_0$ is near zero and $d_0$ is
 close to a \emph{local} minimizer of $E(d)$, then the
total energy $\mathcal{E}$ will never drop too much. Actually, we
shall see below that the global solution will stay close to the
given minimizer for all time (i.e., Lyaponov stability) and
$\mathcal{E}(t)$ will converge to the same energy level of the
\textit{local} minimizer. This generalized result also applies to
all those simplified Ericksen--Leslie systems considered in the literature
\cite{LL95,LL01,SL08,LWX10,LW08}.

 \bd \label{definition of local minimizer} $d^\ast \in \mathbf{H}^1_p$ is
called a local minimizer of $E(d)$, if there exists $\sigma > 0$,
such that for any $d \in \mathbf{H}^1_p$ satisfying
$\|d-d^\ast\|_{\mathbf{H}^1} \leq \sigma$, it holds $E(d) \geq
E(d^\ast)$.
 \ed
 \br
  Since any minimizer of $E(d)$ is also a
 critical point of $E(d)$, it satisfies the Euler--Lagrange equation
 \be  - \Delta d + f(d)=0,\quad x\in Q, \quad   d(x)=d(x+e_i),\quad x\in \partial Q.
   \label{staaq}
 \ee
 From the elliptic regularity theory and bootstrap argument, one can easily see that
 if the solution $d\in
\mathbf{H}^1_p$, then $d$ is smooth.\qed
 \er
Next, we state the main result of this section:
\begin{theorem} \label{Main theorem III}
Suppose that $n=3$ and the conditions in \textbf{Case I} are
satisfied. Let $d^\ast \in \mathbf{H}^2_p$ be a local minimizer
of $E(d)$. There exist positive constants $\sigma_1, \sigma_2$,
which may depend on $\lambda_i's$, $\mu_i's$, $Q$, $\sigma$ and
$d^\ast$, such that for any initial data $(v_0, d_0) \in V \times
\mathbf{H}^2_p$ satisfying $$\|v_0\|_{\mathbf{H}^1} \leq 1,\quad
\|d_0-d^\ast\|_{\mathbf{H}^2} \leq 1$$ and $$\|v_0\| \leq \sigma_1,\quad
\|d_0-d^\ast\|_{\mathbf{H}^1} \leq \sigma_2,$$
 we have

(i) the problem \eqref{e1}--\eqref{I.C. in nonzero case} admits a
unique global solution $(v,d)$,

(ii) $(v,d)$ enjoys the same long-time behavior as in Theorem
\ref{vconr}. In addition,
 \be
  \lim_{t\to+\infty}\mathcal{E}(t)=E(d_\infty)=E(d^\ast).\label{leee}
 \ee
\end{theorem}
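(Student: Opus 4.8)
The plan is to combine the energy-drop continuation criterion of Proposition \ref{sm} with a \L ojasiewicz--Simon argument, run inside a continuity (bootstrap) scheme that keeps the director trapped in the $\mathbf{H}^1$-neighborhood of $d^\ast$ on which the local-minimizer property is available. First I would record two initial bounds. Since $\|v_0\|_{\mathbf{H}^1}\le 1$ and $\|d_0-d^\ast\|_{\mathbf{H}^2}\le 1$, and since $d^\ast\in\mathbf{H}^2_p$ is a critical point (so $\Delta d^\ast-f(d^\ast)=0$), one gets $\mathcal{A}(0)\le R$ for a fixed $R$; and since $\|v_0\|\le\sigma_1$, $\|d_0-d^\ast\|_{\mathbf{H}^1}\le\sigma_2$, the energy gap $\mathcal{G}(t):=\mathcal{E}(t)-E(d^\ast)$ satisfies $\mathcal{G}(0)=\tfrac12\|v_0\|^2+(E(d_0)-E(d^\ast))$, which is as small as we wish by continuity of $E$ on $\mathbf{H}^1$ (together with $E(d_0)\ge E(d^\ast)$ once $\sigma_2\le\sigma$). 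I assume throughout that $\sigma_2$ is smaller than both the local-minimizer radius $\sigma$ and the \L ojasiewicz radius $\beta$ of Lemma \ref{ls}.

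The heart of the matter is a differential inequality for $\mathcal{G}(t)$, valid as long as $\|d(t)-d^\ast\|_{\mathbf{H}^1}\le\sigma$. On such an interval $E(d(t))\ge E(d^\ast)$, so $\mathcal{G}(t)\ge 0$; assuming $\mathcal{G}>0$ (otherwise $v\equiv 0$ and $d$ is already stationary), Lemma \ref{BEL} gives $-\tfrac{d}{dt}\mathcal{E}=\mathcal{D}\ge c_0(\|\nabla v\|^2+\|\Delta d-f\|^2)$. Using the Poincar\'e inequality on $v\in V$ and Lemma \ref{ls}, I would show $\mathcal{G}^{2(1-\theta)}\le C\mathcal{D}$: for the director part $(E(d)-E(d^\ast))^{2(1-\theta)}\le\|-\Delta d+f\|_{(\mathbf{H}^1_p)'}^2\le C\|\Delta d-f\|^2$, and for the velocity part $\|v\|^{4(1-\theta)}\le C\|\nabla v\|^2$ since $4(1-\theta)>2$ and $\|\nabla v\|$ is bounded. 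Hence, reading off $\|d_t\|\lesssim\|\nabla v\|+\|\Delta d-f\|$ from \eqref{e3},
\[
-\frac{d}{dt}\mathcal{G}^\theta=\theta\,\mathcal{G}^{\theta-1}\mathcal{D}\ge c\,\mathcal{D}^{1/2}\ge c'\big(\|v\|+\|d_t\|\big),
\]
and integration yields $\int_0^t(\|v\|+\|d_t\|)\,d\tau\le c'^{-1}\mathcal{G}(0)^\theta$ uniformly in $t$.

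This bound controls only the $\mathbf{L}^2$-displacement of $d$, so I would upgrade to $\mathbf{H}^1$ by interpolation, $\|d(t)-d^\ast\|_{\mathbf{H}^1}\le C\|d(t)-d^\ast\|_{\mathbf{L}^2}^{1/2}\|d(t)-d^\ast\|_{\mathbf{H}^2}^{1/2}$, which is where the uniform bound $\mathcal{A}(t)\le K$ enters. The two estimates then feed each other in a continuity argument: while $\|d-d^\ast\|_{\mathbf{H}^1}\le\sigma$ one has $\mathcal{E}(t)\ge E(d^\ast)\ge\mathcal{E}(0)-\mathcal{G}(0)\ge\mathcal{E}(0)-\varepsilon_0$, so Proposition \ref{sm} keeps the solution alive with $\mathcal{A}\le K$; conversely this $\mathbf{H}^2$-bound plus the integrated estimate give $\|d(t)-d^\ast\|_{\mathbf{H}^1}\le C K^{1/2}(\sigma_2+\mathcal{G}(0)^\theta)^{1/2}$, which is strictly $<\sigma$ once $\sigma_1,\sigma_2$ are small. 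Thus the trap never opens, the solution is global with $\mathcal{A}$ bounded, and $d(t)$ stays within $\sigma$ of $d^\ast$ for all time, which is statement (i) together with Lyapunov stability. For (ii), $\int_0^\infty\|d_t\|\,dt<\infty$ forces $d(t)\to d_\infty$ in $\mathbf{L}^2$, hence in $\mathbf{H}^1$ by interpolation, with $d_\infty$ solving \eqref{s2a}--\eqref{sbd} and $\|d_\infty-d^\ast\|_{\mathbf{H}^1}\le\sigma$, while Lemma \ref{vcon} gives $v\to 0$ in $V$; the rate \eqref{rate} follows as in Theorem \ref{vconr} by combining $\tfrac{d}{dt}\mathcal{G}\le-c\,\mathcal{G}^{2(1-\theta)}$ with Lemma \ref{HOEL}. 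The energy identity is the cleanest point: $E(d_\infty)\ge E(d^\ast)$ since $d_\infty$ lies in the minimizer ball, whereas applying Lemma \ref{ls} at the \emph{critical} point $d_\infty$ gives $0=\|-\Delta d_\infty+f(d_\infty)\|_{(\mathbf{H}^1_p)'}\ge|E(d_\infty)-E(d^\ast)|^{1-\theta}$, forcing equality.

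The hard part will be closing this bootstrap consistently: reconciling the $\mathbf{L}^2$-level output of the \L ojasiewicz--Simon estimate with the $\mathbf{H}^1$-level hypothesis behind the local-minimizer property, since the interpolation constant depends on the very $\mathbf{H}^2$-bound $K$ whose persistence relies on the energy not dropping, which is itself guaranteed only while $d$ remains in the ball. The quantitative selection of $\sigma_1,\sigma_2$ — smaller than $\sigma$, than $\beta$, and than the threshold the interpolation demands — must therefore be made so that all these constraints hold simultaneously; I expect this circular dependence, rather than any single estimate, to be the genuine obstacle.
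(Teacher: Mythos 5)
Your proposal is correct and follows essentially the same route as the paper's proof: Proposition \ref{sm} for continuation, the \L ojasiewicz--Simon inequality to get $\int_0^t\|d_t\|\,d\tau\lesssim\mathcal{G}(0)^\theta$, the $\mathbf{L}^2$--$\mathbf{H}^2$ interpolation to trap $d$ in the $\mathbf{H}^1$-ball of $d^\ast$, and a final application of Lemma \ref{ls} at $d_\infty$ for the energy identity. The only cosmetic difference is that you phrase the trapping as a continuity/bootstrap scheme where the paper uses a minimal-time contradiction argument, and you package the \L ojasiewicz step as $\mathcal{G}^{2(1-\theta)}\le C\mathcal{D}$ rather than the paper's quotient form; both yield the same differential inequality.
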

\begin{proof}
Without loss of generality, we assume that the constant $\sigma$ in
Definition \ref{definition of local minimizer} satisfies $
\sigma\leq 1$. Throughout the proof, $C_i$, $i=1, 2,\cdots $ denote
generic constants depending only on $\mu_i's$, $\lambda_i's$,
$\sigma$ and $d^\ast$.
By our assumptions, we easily see that
 \bea
 && \|v(t)\|+\|d(t)\|_{\mathbf{H}^1}\leq C_1,\quad \forall\, t\geq 0, \label{locunil}\\
 && \mathcal{A}(0) = \|\nabla v_0\|^2+\|\Delta
d_0-f(d_0)\|^2 \leq  C_2.
 \eea

Recalling the proof of Proposition \ref{sm}, we take $R=C_2$ for our
current case. The constant $C_*$ in \eqref{ODE} can be determined by
$C_1$ and $\mu's, \lambda's$ (cf. Lemma \ref{HOEL}). Then we set
$t_0=\frac34T_{max}(C_2, C_*)$ and take $T^*=t_0$. Finally, the
critical constant $\varepsilon_0$ is given by \eqref{epsilon}. It
follows from \eqref{uniK} that $\mathcal{A}(t)$ is uniformly bounded
on $[0, t_0]$, which implies
 \be
  \|v(t)\|_V+\|d(t)\|_{\mathbf{H}^2}\leq C_3,\quad \forall\, t\in [0,t_0].
  \label{hie}
  \ee


Next, we extend the local solution to $[0,+\infty)$ by using the \L
ojasiewicz--Simon approach. Since the minimizer $d^*$ is a critical
point of $E(d)$, we take $\psi=d^*$ in the \L ojasiewicz--Simon
inequality (cf. Lemma \ref{ls}), then the constants $\beta>0,
\theta\in (0, \frac12)$ are determined by $d^*$ and \eqref{lsa}
holds.

 The proof consists of several steps.

 \textbf{Step 1.}  In order to apply Proposition \ref{sm} with $T^*=t_0$, it suffices to show that
 \be
  \mathcal{E}(t)-\mathcal{E}(0) \geq -\varepsilon_0, \quad \forall\, t\in [0, t_0].
  \label{tedro}
 \ee
 Using \eqref{locunil} and the Sobolev embedding theorems, we have $$|E(d_0)-E(d^\ast)| \leq C_4\|d_0-d^\ast\|_{\mathbf{H}^1},$$
 which implies that
 \bea
\mathcal{E}(t)-\mathcal{E}(0)&=&\frac12\|v(t)\|^2-\frac12\|v_0\|^2+E(d(t))-E(d_0)
\non\\
&\geq&-\frac12\|v_0\|^2+E(d(t))-E(d^\ast)+E(d^\ast)-E(d_0) \non\\
&\geq& -\frac12\|v_0\|^2-C_4\|d_0-d^\ast\|_{H^1}
+E(d(t))-E(d^\ast).\label{tedrop1}
 \eea
 Take
 \be
 \sigma_1\leq \min\Big\{\varepsilon_0^\frac12, 1\Big\}, \quad \sigma_2\leq
 \min\Big\{\frac{\varepsilon_0}{2C_4},1\Big\}.\non
 \ee
 Then by \eqref{tedrop1}, it is easy to check \eqref{tedro} will be satisfied provided
 that
 \be
  E(d(t)) - E(d^\ast)\geq 0, \quad \forall \,  t \in[0, t_0].  \label{nearc}
 \ee
 By the definition of $d^\ast$, it reduces to prove that
\be
 \|d(t)-d^\ast\|_{\mathbf{H}^1}\leq \sigma, \quad \forall \,  t \in[0,
 t_0].\label{nlm}
 \ee
 Actually, we can prove a slightly stronger conclusion such that
 \be
 \|d(t)-d^\ast\|_{\mathbf{H}^1}< \omega:=\frac12\min\{\sigma, \beta\}, \quad \forall \,  t \in[0,
 t_0].\label{nlma}
 \ee
 Suppose $$\sigma_2\leq \frac14  \omega.$$
 We use a contradiction argument. If \eqref{nlma} is not true, then by the continuity of $d$ that
$d\in C([0,t_0]; \mathbf{H}^1)$, there exists a minimal time $T_0\in
(0, t_0]$, such that $$\|d(T_0)-d^\ast\|_{\mathbf{H}^1}=\omega.$$
 Observe that $$\mathcal{E}(t) =\frac12\|v(t)\|^2+E(d(t))\geq
E(d^\ast), \quad \forall \, t \in [0, T_0].$$

  First, we consider the trivial
case that for some $T\leq T_0$, $\mathcal{E}(T)=E(d^\ast)$. Then
we deduce from the definition of the local minimizer that for $t\geq
T$, $\mathcal{E}$ cannot drop and will remain $E(d^\ast)$. Thus, we
infer from the basic energy law \eqref{basic energy law at the
critical point of lambda 2} that the evolution will be stationary
and the conclusion easily follows.

In the following, we just assume
$\mathcal{E}(t) > E(d^\ast)$ for $t \in [0, T_0]$. Applying Lemma
\ref{ls} with $\psi=d^\ast$, we get
 \bea  && -\frac{d}{dt}[\mathcal{E}(t)-E(d^\ast)]^\theta\non\\
 &=&-\theta[\mathcal{E}(t)-E(d^\ast)]^{\theta-1}\frac{d}{dt}\mathcal{E}(t)\non\\
&\geq& \frac{\theta\left(\frac{\mu_4}{2}\|\nabla
v\|^2-\frac{1}{\lambda_1}\|\Delta d-f\|^2
\right)}{C(\|v\|^{2(1-\theta)}+\|\Delta d-f\|)}
\non\\
&\geq& C_5(\|\nabla v\|+\|\Delta d-f\|), \quad \forall \, t\in (0,
T_0).\non
 \eea
On the other hand, it follows from \eqref{e3} and \eqref{hie} that
 \bea \|d_t\| &\leq&
\|v\cdot\nabla d\|+\|\Omega
d\|+\Big|\frac{\lambda_2}{\lambda_1}\Big|\|Ad\|-\frac{1}{\lambda_1}\|\Delta
d-f\| \non\\
&\leq& C_6(\|v\|_{\mathbf{L}^6}\|\nabla d\|_{\mathbf{L}^3}+\|\nabla
v\|\|d\|_{\mathbf{L}^\infty}+\|\Delta d-f\|) \non\\
&\leq& C_7(\|\nabla v\|+ \|\Delta d-f\|), \quad \forall \, t\in
[0,t_0].
 \eea
Consequently,
 \bea
 \|d(T_0)-d_0\|_{\mathbf{H}^1} &\leq&
 C_8\|d(T_0)-d_0\|^{\frac12}\|d(T_0)-d_0\|_{\mathbf{H}^2}^{\frac12}\non\\
 &\leq& C_9\Big( \int_{0}^{T_0}\|d_t(t)\|dt \Big)^\frac12 \leq
C_{10}[\mathcal{E}(0)-E(d^\ast)]^{\frac{\theta}{2}}\non\\
&  \leq & C_{10}\Big(
\frac12\|v_0\|^2+C_4\|d_0-d^\ast\|_{\mathbf{H}^1} \Big)^\frac{\theta}{2} \non\\
&\leq& C_{11}\big(\|v_0\|^{\theta}+
\|d_0-d^\ast\|_{\mathbf{H}^1}^\frac{\theta}{2}\big).\label{dddif}
 \eea
Finally, choosing (also taking the previous assumptions into
account)
 \be \sigma_1=\min\left\{\varepsilon_0^\frac12,
\Big(\dfrac{\omega}{4C_{11}}\Big)^{\frac{1}{\theta}}, 1\right\}, \ \
\sigma_2=\min\left\{\frac{\varepsilon_0}{2C_4},
\Big(\dfrac{\omega}{4C_{11}}\Big)^{\frac{2}{\theta}},
\frac{\omega}{4}, 1\right\}, \label{bound of delta 1 and delta 2}
 \ee
 we can deduce from \eqref{dddif} that
 \bea \|d(T_0)-d^\ast\|_{\mathbf{H}^1}
 &\leq&
\|d(T_0)-d_0\|_{\mathbf{H}^1}+\|d_0-d^\ast\|_{\mathbf{H}^1}\non\\
& \leq&
\frac{\omega}{4}+\frac{\omega}{4}+\frac{\omega}{4} < \omega, \non
\eea
which leads to a contradiction with the definition of $T_0$. Thus,
\eqref{nlma} is true and so is \eqref{nearc}, which implies that
\eqref{tedro} is satisfied.

As in the proof of Proposition \ref{sm}, there exists a $t_{\ast}
\in [\frac{2t_0}{3}, t_0]$, such that $\mathcal{A}(t^*)\leq R$. Then
we conclude that $\mathcal{A}(t)$ is uniformly bounded on $[0,
t^*+t_0]\supset [0, \frac{5t_0}{3}]$ (with \emph{the same bound} as
on $[0,t_0]$). Here, we note the important fact that the bound of
$\mathcal{A}(t)$ only depends on $R, C_*, t_0$ but not on the length
of existence interval.

\textbf{Step 2}. Now we take $T^*=\frac53t_0$. By the same argument
as in Step 1, we can show that $$
  \mathcal{E}(t)-\mathcal{E}(0) \geq -\varepsilon_0, \quad t\in [0,
  T^*].$$
   Again, we obtain that $\mathcal{A}(t)$ is uniformly bounded on $[0, T^*+\frac23t_0]$ (with \emph{the same
bound} as on $[0,t_0]$). By iteration, one can see that the local
solution can be extended by a fixed length $\frac23 t_0$ at each
step and $\mathcal{A}(t)$ is uniformly bounded by a constant only
depending on $R, C_*, t_0$.

Therefore, we can show that $(v, d)$ is indeed a global solution.
Moreover, the following uniform estimate holds
 \be
 \|v(t)\|_{\mathbf{H}^1}+\|d(t)\|_{\mathbf{H}^2}\leq K,\quad \forall\, t\geq
 0, \label{uniKK}
 \ee
 where $K$ depends on $C_1$, $R, C_*, t_0$. The conclusion (i) is proved.

\textbf{Step 3}. Based on the uniform estimate \eqref{uniKK}, a similar argument to
Theorem \ref{vconr} yields that there exists a $d_\infty$ satisfying
\eqref{s2a}--\eqref{sbd}, such that
 \be \lim_{t \rightarrow
+\infty}(\|v(t)\|_{V}+ \|d(t)-d_\infty\|_{\mathbf{H}^2})=0,
\label{convvv}
 \ee
 with the convergence rate
\eqref{rate} (We remark that in \eqref{rate}, the \L ojasiewicz
exponent $\theta$ is determined by the limiting function $d_\infty$,
which is \emph{different} from the one we have used in Step 1).

By repeating the argument in Step 1, we are able to show that
$$\|d(t)-d^\ast\|_{\mathbf{H}^1}\leq \omega, \quad \forall\, t\geq 0.$$
 Then
for $t$ sufficiently large, we have
 \bea
 \|d_\infty-d^\ast\|_{\mathbf{H}^1} &\leq&
 \|d_\infty-d(t)\|_{\mathbf{H}^1}+\|d(t)-d^\ast\|_{\mathbf{H}^1}\non\\
 &\leq&
 \frac32 \omega< \min\{\beta, \sigma\}.\label{indlo}
 \eea
 Applying Lemma \ref{ls} again with $d=d_\infty$ and
 $\psi=d^\ast$, we obtain \be |E(d_\infty)-E(d^\ast)|^{1-\theta}\leq
 \|-\Delta d^\ast+f(d^\ast)\|=0,\label{EsED}
 \ee
 which together with \eqref{convvv} yields \eqref{leee}. The proof
 is complete.\qed
\end{proof}
 \br
 We note that in the assumptions $\|v_0\|_{\mathbf{H}^1}
\leq 1$ and $\|d_0-d^\ast\|_{\mathbf{H}^2} \leq 1$, the bound $1$ is
not essential and it can be replaced by any fixed positive constant
$M$. In this case those constants in the proof of Theorem
\ref{Main theorem III} may also depend on $M$.\qed
 \er
 \bc[Nonlinear stability]
Suppose that $n=3$ and the conditions in \textbf{Case I} are
satisfied. Let $d^\ast \in \mathbf{H}^2_p$ be a local minimizer
of $E(d)$. Then  $d^\ast$ is Lyapunov stable.
\ec
 \begin{proof}
 We observe that in the proof of Theorem \ref{Main
theorem III}, $\omega$  can be an arbitrarily small positive constant
satisfying $\omega\leq \frac12\min\{\sigma, \beta\}$, by our choice
of $\sigma_1, \sigma_2$, we actually have shown that the local
minimizer $d^\ast$ is {\rm Lyapunov stable}.\qed
 \end{proof}
 \br
   We can see from
\eqref{indlo} and \eqref{EsED} that the asymptotic limit $d_\infty$ obtained in Theorem \ref{Main theorem III} (ii) is also a
 local minimizer of $E(d)$ (having the same energy level as $d^\ast$). Moreover, if $d^\ast$ is an \textit{isolated} local minimizer, then $d_\infty=d^\ast$
and $d^\ast$ is  \textit{asymptotically stable}.\qed
 \er

\section{Parodi's Relation and Linear Stability}
\setcounter{equation}{0}
In Section 5.2 we have shown that Parodi's relation can be viewed as a sufficient condition for the nonlinear stability of the Ericksen--Leslie system \eqref{e1}--\eqref{e3}. It is still an open problem whether similar result holds true for the original Ericksen--Leslie system \eqref{the most primitive equ 1}--\eqref{the most
primitive equ 3}. Alternatively, in this section we shall make a preliminary study to discuss the connection between Parodi's
relation and linear stability of the original Ericksen--Leslie system \eqref{the most primitive equ 1}--\eqref{the most
primitive equ 3}.

 For the sake of simplicity, we assume that $$\rho=1,\quad  \rho_1=0,\quad
F=G=0$$ and the Oseen--Frank energy density function takes the
simple form $$W=\frac{1}{2}|\nabla d|^2.$$ Besides, we are interested
in the bulk properties of the Ericksen--Leslie system and consider the problem
in the whole space $\mathbb{R}^3$, neglecting the boundary effects.
 We thus simply set the Lagrangian multiplier $\beta=0$ since $\beta$
does not enter into the local field equations and can be determined
through the boundary conditions, if any, on the director stress (cf.
\cite{C73a}). The nematics usually adopt a constant orientation in
uniform shear flow. The analysis in \cite[Section 6]{Le68} shows that for a
material that aligns in shear flow the viscous coefficients must
satisfy $$|\mu_5-\mu_6|\geq |\mu_2-\mu_3|.$$
Here, we assume that \eqref{lama1} and
\eqref{lama1a} are satisfied, then we have
 \be
 |\mu_5-\mu_6|\geq \mu_3-\mu_2>0.
\label{lambda one is less than lambda two}
 \ee

Consider the basic uniformly-oriented equilibrium state in which the
material is at rest (zero velocity), the orientation is uniformly
parallel to a constant unit vector $n=(n_{1}, n_{2}, n_{3})^T$, the
hydrostatic pressure $\tilde{p}$ is a constant and the director
tension (Lagrangian multiplier) $\gamma$ is zero. The equilibrium
state is disturbed by perturbations with a small amplitude: velocity
field $v$, director $d+n$, pressure $\bar{p}+\tilde{p}$ and director
tension $\bar{\gamma}$. After a direct computation, the linearized
equations of the Ericksen--Leslie system \eqref{the most primitive equ
1}--\eqref{the most primitive equ 3} for $(v, d)$ are (cf. e.g.,
\cite{C73})
 \bea
&&\frac{\partial v_i}{\partial t}+\bar{p}_{,i}-
\mu_1{n}_{i}{n}_{j}{n}_{k}{n}_{l}v_{j,kl}
 -\frac{\mu_2+\mu_5}{2}{n}_{j}{n}_{k}v_{k,ij}-\frac{\mu_3+\mu_6}{2}{n}_{i}{n}_{k}v_{k,jj}
 -\frac{\mu_4}{2}v_{i, jj}
 \non\\
 &&\quad -\frac{\mu_5-\mu_2}{2}{n}_{j}{n}_{k}v_{i,kj}-\mu_2{n}_{j}\frac{\partial d_{i,j}}{\partial t}
  -\mu_3{n}_{i}\frac{\partial d_{j,j}}{\partial t}=0, \label{lev} \\
  &&v_{i,i}=0, \label{imc}\\
&&-\lambda_1\frac{\partial d_i}{\partial
t}-\bar{\gamma}{n}_{i}-d_{i, jj}
+\frac{\lambda_1-\lambda_2}{2}{n}_{j}v_{i,j}-\frac{\lambda_1+\lambda_2}{2}{n}_{j}v_{j,i}=0,
\label{led}\\
&& d_in_{i}=0\label{led2}.
 \eea

We study the behavior of
infinitesimal, sinusoidal disturbances by a linear stability analysis. For this purpose, we seek plane wave solutions to the linearized system \eqref{lev}--\eqref{led2} of the
following form (see e.g., \cite{C74})
 \bea
d&=&\mathbf{a}e^{\sqrt{-1}(m\nu\cdot x-\omega t)},\label{plane wave
solution for director}\\
 v&=&\mathbf{b}e^{\sqrt{-1}(m\nu\cdot x-\omega
t)}, \label{plane wave solution for velocity}
\\
\bar{\gamma}&=&Ce^{\sqrt{-1}(m\nu\cdot x-\omega t)},  \\
\bar{p}&=&De^{\sqrt{-1}(m\nu\cdot x-\omega t)}.
 \eea
where  $m$ is the complex wave number, $\nu=(\nu_1,\nu_2,\nu_3)^T$
is a given unit vector specifying the direction of propagation of
the wave and $\omega$ is the complex frequency number.
$\mathbf{a}$, $\mathbf{b}$ are two constant vectors and $C$ and $D$ are two constants.
Due to  the constraint on
the unit length of the director \eqref{led2} and the incompressibility condition \eqref{imc}, the constant vectors $\mathbf{a}$ and
$\mathbf{b}$ satisfy
 \be n \cdot \mathbf{a}=0, \quad  \nu\cdot\mathbf{b}=0. \label{plane wave equation constraints}
 \ee
Let $0 \leq \theta \leq \frac{\pi}{2}$ be a constant angle such that
$\sin\theta=\nu\cdot n$. We consider the in-plane mode and deduce from \eqref{plane wave equation constraints} that (cf. \cite{C74})
\bea \mathbf{a}&=&A(\nu-n\sin\theta),   \label{expansion
of n}\\
\mathbf{b}&=&B(n-\nu\sin\theta),  \label{expansion of v}
 \eea
 where $A$ and $B$ are two constants.
Inserting \eqref{expansion of n} and \eqref{expansion of v} into the linearized system \eqref{lev}--\eqref{led2},
 after direct but tedious computations, we obtain
\bea &&\left( m^2+\sqrt{-1}\lambda_1\omega
\right)A-\frac{\sqrt{-1}m}{2}q(\theta)B=0,
 \label{third equation of Currie 74}  \\
&&m\omega p(\theta)A+\left(\frac{m^2g(\theta)}{2}-\sqrt{-1}\omega
\right)B=0, \label{equation 2 of currie} \\
&&C\sqrt{-1}m+\sin\theta\left(\sqrt{-1} w+\frac{\mu_2+\mu_5}{2}m^2\cos^2\theta
-\frac{m^2\mu_4}{2}\right) B \non\\
&&\quad\ \  -\frac{m^2(\mu_5-\mu_2)}{2}B \sin^3\theta-\mu_2mwA\sin\theta=0,  \label{fix C}\\
&& D+\sin\theta \left( m^2+\sqrt{-1}\lambda_1\omega
\right)A+\frac{\lambda_2-\lambda_1}{2}\sqrt{-1}m B\sin\theta =0,
\label{fix D}
\eea
where \bea g(\theta) &=&
2\mu_1\cos^2\theta\sin^2\theta+(\mu_3+\mu_6)\cos^2\theta+\mu_4+(\mu_5-\mu_2)\sin^2\theta,
\label{g} \eea \be
     p(\theta) = \mu_2\sin^2\theta-\mu_3\cos^2\theta, \label{p} \ee
     and
     \bea
     q(\theta)&=&(\lambda_1+\lambda_2)\cos^2\theta+(\lambda_1-\lambda_2)\sin^2\theta  \non\\
&=&(\mu_2-\mu_3+\mu_5-\mu_6)\cos^2\theta+(\mu_2-\mu_3-\mu_5+\mu_6)\sin^2\theta.
\label{q}
 \eea

\bl\label{solution to p(theta) and q(theta)} Suppose that \eqref{lama1}, \eqref{lambda one is less than lambda two} are satisfied and $\mu_2\mu_3\geq 0$. There exists a unique real solution
 $\theta_0\in[0,\frac{\pi}{2}]$ to the equations
 \be
  \left\{\begin{array}{l}  p(\theta)=0, \\
   q(\theta)=0, \\
   \end{array}
  \right.\label{pqq}
 \ee if and only if Parodi's relation \eqref{lam2} holds.
  \el
\begin{proof}
If \eqref{lam2} holds, we have $q(\theta)=2p(\theta)$. Then it follows from \eqref{lambda one is less than lambda
two} and $\mu_2\mu_3\geq 0$ that the unique solution to \eqref{pqq} is given by
$$\theta_0=\arctan\left(\sqrt{\frac{\mu_3}{\mu_2}}\right)\in [0,\frac{\pi}{2}].$$

Conversely, suppose $\theta_0\in [0,\frac{\pi}{2}]$ is the solution to \eqref{solution to p(theta) and
q(theta)}. We discuss three subcases.

\textit{Case 1:} $\theta_0=0$. It follows from \eqref{p}
 that $\mu_3=0$. Then we infer from \eqref{q} that $\mu_2+\mu_5-\mu_6=0$ and as a result, $\mu_2+\mu_3=\mu_6-\mu_5$.

\textit{Case 2:} $\theta_0=\frac{\pi}{2}$. In this case we have $\mu_2=0$ and the proof is similar
to \textit{Case 1}.

\textit{Case 3:} $0 < \theta_0 < \frac{\pi}{2}$. In this case it is easy to see that $\mu_2\neq 0$, $\mu_3\neq 0$. Since
$\sin\theta_0 \neq 0$, $\cos\theta_0 \neq 0$, we deduce from \eqref{p}, \eqref{q} that
$$(\lambda_1-\lambda_2)\mu_3=-(\lambda_1+\lambda_2)\mu_2,$$
which combined with \eqref{lama1} yields \eqref{lam2}. The proof is complete.\qed
\end{proof}

In the remaining part of this section, we always suppose that
\eqref{lama1} is valid. We make the following
assumptions on the Leslie coefficients $\mu_2$, $\mu_3$, $\mu_5$, $\mu_6$:
 \bea
&&\mu_6 > 0, \ \mu_2 > 0,    \label{Le1} \\
&&\mu_5 < \mbox{min}\{\mu_2, \mu_6\},    \label{Le2} \\
&& \mu_3=\mu_6-\mu_5+\mu_2-\epsilon,\label{Le3a}\eea
where
 \be 0< \epsilon < \min\left\{\mu_6-\mu_5, 2\mu_2,
\frac{2(\mu_6-\mu_5)(\mu_2-\mu_5)}{4\mu_6-3\mu_5+3\mu_2} \right\}. \label{Le3}
 \ee
 Then we have
\begin{lemma}\label{unique solution}
Under the
assumptions \eqref{Le1}--\eqref{Le3}, the Leslie coefficients satisfy conditions \eqref{lama1a} and
\eqref{lambda one is less than lambda two}, but Parodi's relation \eqref{lam2} does not hold. Moreover, there exists
a unique solution $\theta_0 \in (0, \frac{\pi}{2})$ such that
$p(\theta_0) \neq 0$ and $q(\theta_0) = 0$.
\end{lemma}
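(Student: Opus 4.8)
The plan is to treat this as a direct verification lemma: substitute the defining relation \eqref{Le3a} into the expressions for $\lambda_1,\lambda_2$ and reduce every assertion to an elementary inequality in the single free parameter $\epsilon$, controlled by the bounds in \eqref{Le3}. First I would record, using \eqref{lama1} and \eqref{Le3a}, that
\[
\lambda_1=\mu_2-\mu_3=\epsilon-(\mu_6-\mu_5),\qquad \lambda_2=\mu_5-\mu_6=-(\mu_6-\mu_5).
\]
Since $\mu_5<\mu_6$ by \eqref{Le2}, the quantity $k:=\mu_6-\mu_5$ is positive, so the constraint $\epsilon<k$ from \eqref{Le3} immediately gives $\lambda_1<0$, which is \eqref{lama1a}.

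Next I would dispatch \eqref{lambda one is less than lambda two} and the failure of Parodi's relation, both of which become one-line sign checks. Indeed $\mu_3-\mu_2=k-\epsilon>0$ and $|\mu_5-\mu_6|=k\geq k-\epsilon$, giving \eqref{lambda one is less than lambda two}; while $\mu_2+\mu_3-(\mu_6-\mu_5)=2\mu_2-\epsilon>0$ by the bound $\epsilon<2\mu_2$, so \eqref{lam2} cannot hold. Only the first two of the three upper bounds on $\epsilon$ in \eqref{Le3} are needed up to this point.

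For the existence and uniqueness of $\theta_0$ I would rewrite \eqref{q} as $q(\theta)=(\lambda_1+\lambda_2)+\big((\lambda_1-\lambda_2)-(\lambda_1+\lambda_2)\big)\sin^2\theta$ and compute $\lambda_1+\lambda_2=\epsilon-2k<0$ and $\lambda_1-\lambda_2=\epsilon>0$. Thus $q$ is a strictly increasing affine function of $\sin^2\theta$ on $[0,\tfrac{\pi}{2}]$ with $q(0)<0<q(\tfrac{\pi}{2})$, so there is a unique zero $\theta_0\in(0,\tfrac{\pi}{2})$, characterised by $\tan^2\theta_0=-(\lambda_1+\lambda_2)/(\lambda_1-\lambda_2)=(2k-\epsilon)/\epsilon$.

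The main point — and the only place where a genuine computation is required — is to rule out $p(\theta_0)=0$. Writing $p(\theta_0)=\cos^2\theta_0\,(\mu_2\tan^2\theta_0-\mu_3)$, I would substitute the value of $\tan^2\theta_0$ just found together with $\mu_3=k+\mu_2-\epsilon$ and simplify the numerator; the expected outcome is the clean factorisation
\[
\mu_2\tan^2\theta_0-\mu_3=\frac{(k-\epsilon)(2\mu_2-\epsilon)}{\epsilon}.
\]
Both factors are strictly positive precisely by the bounds $\epsilon<k$ and $\epsilon<2\mu_2$, so $p(\theta_0)>0\neq0$, completing the proof. I expect this factorisation to be the crux: it is where the two active constraints on $\epsilon$ conspire to keep $\theta_0$ away from the common root of $p$ and $q$ (which, by Lemma \ref{solution to p(theta) and q(theta)}, would force Parodi's relation), the remaining assertions being immediate sign checks.
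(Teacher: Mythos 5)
Your proof is correct, and all the computations check out: $\lambda_1=\epsilon-(\mu_6-\mu_5)<0$, $\lambda_2=-(\mu_6-\mu_5)$, the sign checks for \eqref{lambda one is less than lambda two} and for the failure of \eqref{lam2}, the value $\tan^2\theta_0=\big(2(\mu_6-\mu_5)-\epsilon\big)/\epsilon$, and the factorisation $\mu_2\tan^2\theta_0-\mu_3=(\mu_6-\mu_5-\epsilon)(2\mu_2-\epsilon)/\epsilon>0$ all verify. The difference from the paper lies in the last step. The paper does not compute $p(\theta_0)$ at all: having checked $\mu_2\mu_3>0$ and that Parodi's relation fails, it invokes Lemma \ref{solution to p(theta) and q(theta)} (whose converse direction shows that any common zero of $p$ and $q$ forces Parodi's relation) to conclude that the unique zero of $q$ cannot also be a zero of $p$. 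Your route replaces this indirect appeal by an explicit evaluation of $p(\theta_0)$, which is self-contained and has the advantage of exposing exactly which two of the three constraints in \eqref{Le3} are active (namely $\epsilon<\mu_6-\mu_5$ and $\epsilon<2\mu_2$) and of showing the stronger fact $p(\theta_0)>0$; the paper's route is shorter because it reuses work already done in Lemma \ref{solution to p(theta) and q(theta)}. Either argument is acceptable.
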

\begin{proof}
It easily follows from \eqref{lama1}, \eqref{Le3a} and \eqref{Le3} that \eqref{lama1a} is satisfied. Besides, \eqref{Le3a} and \eqref{Le3} also imply that $$\mu_6-\mu_5 <
\mu_3-\mu_2 + 2\mu_2=\mu_2+\mu_3,$$ so Parodi's relation
\eqref{lam2} is not valid in this case. \eqref{lambda one is less
than lambda two} can be deduced from \eqref{Le2}, \eqref{Le3} and
\eqref{lama1a} in the sense that $$|\mu_5-\mu_6|=\mu_6-\mu_5 >
\mu_3-\mu_2 > 0.$$ Finally, \eqref{lama1a} and \eqref{Le1} yield that
$\mu_2\mu_3 > 0$. Therefore, we can deduce from Lemma \ref{solution to p(theta)
and q(theta)} that there exists an angle
\bea
 \theta_0&=&\arctan\sqrt{\frac{\mu_6-\mu_5+\mu_3-\mu_2}{\mu_6-\mu_5-\mu_3+\mu_2}}\non\\
 &=& \arctan\sqrt{\frac{2(\mu_6-\mu_5)-\epsilon}{\epsilon}}\in
\left(0,\frac{\pi}{2}\right)\label{theaa}
 \eea
 such that
 \be p(\theta_0)\neq 0\quad
\text{and}\quad q(\theta_0)=0.\label{pqt}
 \ee
The proof is complete.\qed
\end{proof}

We further assume that
 \bea
 && 0\leq \mu_1<  \frac14 (2\mu_6-\mu_5+\mu_2),  \label{Le4} \\
 && 0 \leq \mu_4<\frac12
(2\mu_6-\mu_5+\mu_2)\cos^2\theta_0,   \label{Le5}
 \eea
  where
$\theta_0$ is defined by \eqref{theaa}. Then we can state
the main result of this section:
 \begin{theorem}
 \label{proposition on currie}
Suppose that the Leslie coefficients $\mu_1, ...,\mu_6$ satisfy the assumptions
\eqref{Le1}--\eqref{Le3}, \eqref{Le4} and \eqref{Le5}. Then the linearized Ericksen--Leslie system
\eqref{lev}--\eqref{led2} admits unstable plane wave solutions.
\end{theorem}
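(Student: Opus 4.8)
The plan is to substitute the plane-wave ansatz and reduce the entire question to the $2\times 2$ homogeneous system \eqref{third equation of Currie 74}--\eqref{equation 2 of currie} for the amplitudes $(A,B)$, whose solvability is governed by the vanishing of the determinant
\[
\det\begin{pmatrix} m^2+\sqrt{-1}\lambda_1\omega & -\frac{\sqrt{-1}m}{2}q(\theta)\\[1mm] m\omega\, p(\theta) & \frac{m^2 g(\theta)}{2}-\sqrt{-1}\omega\end{pmatrix}=0 .
\]
The idea is to choose the direction of propagation so that this determinant factors. By Lemma \ref{unique solution}, the failure of Parodi's relation produces a unique interior angle $\theta_0\in(0,\frac{\pi}{2})$, given explicitly in \eqref{theaa}, with $q(\theta_0)=0$ (and $p(\theta_0)\neq 0$). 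Fixing $\theta=\theta_0$ makes the top-right entry vanish, so the cross term (which is proportional to $q(\theta_0)$) drops out and the dispersion relation collapses to the product
\[
\left(m^2+\sqrt{-1}\lambda_1\omega\right)\left(\frac{m^2 g(\theta_0)}{2}-\sqrt{-1}\omega\right)=0 .
\]

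Next I read off the two branches. The first factor gives $\omega_1=\sqrt{-1}\,m^2/\lambda_1$, which for real $m$ has $\mathrm{Im}\,\omega_1=m^2/\lambda_1<0$ by \eqref{lama1a}, hence is always damped. The second factor gives $\omega_2=-\tfrac{\sqrt{-1}}{2}m^2 g(\theta_0)$, so that $\mathrm{Im}\,\omega_2=-\tfrac12 m^2 g(\theta_0)$ for real $m$. Thus $\omega_2$ is a growing mode precisely when $g(\theta_0)<0$; at this frequency the factor $m^2+\sqrt{-1}\lambda_1\omega_2$ is nonzero, which forces $A=0$, while $B\neq 0$ is free and $C,D$ are then fixed by \eqref{fix C} and \eqref{fix D}. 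This produces a genuine plane-wave solution of \eqref{lev}--\eqref{led2} whose amplitude grows like $e^{\frac12 m^2|g(\theta_0)|\,t}$, the growth being faster for larger $|m|$. Hence the theorem reduces to the single scalar inequality $g(\theta_0)<0$, i.e.\ to the statement that along this special orientation the effective shear viscosity $g(\theta_0)$ becomes negative.

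Finally, I verify $g(\theta_0)<0$, which is where all the hypotheses are consumed. From \eqref{theaa} one has $\cos^2\theta_0=\frac{\epsilon}{2(\mu_6-\mu_5)}$ and $\sin^2\theta_0=1-\cos^2\theta_0$. Substituting these, together with $\mu_3+\mu_6=2\mu_6-\mu_5+\mu_2-\epsilon$, into \eqref{g}, and bounding $\mu_1$ by \eqref{Le4} and $\mu_4$ by \eqref{Le5} (both stated precisely in terms of $2\mu_6-\mu_5+\mu_2$ and $\cos^2\theta_0$), one obtains an upper bound for $g(\theta_0)$ that is a polynomial in $\epsilon$. Discarding the manifestly negative $O(\epsilon^2)$ terms, the remaining linear-in-$\epsilon$ estimate is negative exactly when $\epsilon<\frac{2(\mu_6-\mu_5)(\mu_2-\mu_5)}{4\mu_6-3\mu_5+3\mu_2}$, which is precisely the third restriction imposed in \eqref{Le3}; the positivity facts $\mu_2-\mu_5>0$, $\mu_6-\mu_5>0$, and $2\mu_6-\mu_5+\mu_2>0$ needed here all follow from \eqref{Le1}--\eqref{Le2}. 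I expect this last computation to be the only delicate point: the bookkeeping must be carried out so that the algebraic threshold on $\epsilon$ matches the one in \eqref{Le3} exactly. Everything else is the soft structural observation that a vanishing $q(\theta_0)$ decouples an always-damped director mode from a velocity mode whose effective viscosity $g(\theta_0)$ is driven negative by $\mu_5<\mu_2$ once $\mu_1,\mu_4$ are kept small.
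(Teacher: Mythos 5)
Your proposal is correct and follows essentially the same route as the paper: fix the angle $\theta_0$ from Lemma \ref{unique solution} so that $q(\theta_0)=0$ decouples the system, take $A=0$, $B=1$ to get $\omega=-\sqrt{-1}\,m^2g(\theta_0)/2$, and verify $g(\theta_0)<0$ from \eqref{Le1}--\eqref{Le5}. Your final computation (substituting $\cos^2\theta_0=\epsilon/(2(\mu_6-\mu_5))$ and isolating the linear-in-$\epsilon$ threshold) is just a repackaging of the paper's estimate via the lower bound \eqref{eee1} on $\tan^2\theta_0$, and it lands on exactly the same condition in \eqref{Le3}.
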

\begin{proof}
Let $\theta_0$ be the angle obtained in Lemma \ref{unique solution} (cf. \eqref{theaa}).
Taking $\theta=\theta_0$ in the equations \eqref{third equation of Currie 74} and
\eqref{equation 2 of currie} and using \eqref{pqt}, we obtain that
\bea \left( m^2+\sqrt{-1}\lambda_1\omega
\right)A &=&0,
\label{equation I}  \\
m\omega p(\theta_0)A+\left(\frac{m^2g(\theta_0)}{2}-\sqrt{-1}\omega
\right)B&=&0. \label{equation II}
 \eea Choosing $$A=0,\ \   B=1\ \ \text{and}\  m\in \mathbb{R},\  m\neq 0,$$
 we deduce from \eqref{equation II} that
 \be
 \omega=-\sqrt{-1}\frac{m^2g(\theta_0)}{2}.\label{oo}
 \ee
 The (imaginary) constants $C$ and $ D$ are determined by \eqref{fix C} and \eqref{fix D}, respectively:
 \bea
 && C=\frac{m\sin\theta_0}{2}\left[ g(\theta_0)+\mu_2
-\mu_4+\mu_5\cos 2\theta_0\right]\sqrt{-1},  \non\\
&& D=-\frac{\lambda_2-\lambda_1}{2}\sqrt{-1}m \sin\theta_0.\non
 \eea
  \eqref{Le3} implies that
  \be
  \tan^2\theta_0= \frac{2(\mu_6-\mu_5)-\epsilon}{\epsilon}>\frac{2(2\mu_6-\mu_5+\mu_2)}{\mu_2-\mu_5}.\label{eee1}
  \ee
 Consequently,  we deduce from \eqref{Le4}, \eqref{Le5} and \eqref{eee1} that
    \bea &&
    g(\theta_0)\non\\
     &=& 2\mu_1\cos^2\theta_0\sin^2\theta_0+(\mu_3+\mu_6)\cos^2\theta_0+\mu_4\non\\
     && \ \ +(\mu_5-\mu_2)\sin^2\theta_0
 \non\\
 &=& 2\mu_1\cos^2\theta_0\sin^2\theta_0+\mu_4+(2\mu_6-\mu_5+\mu_2-\epsilon)\cos^2\theta_0\non\\
 && \ \ +(\mu_5-\mu_2)\sin^2\theta_0\non\\
 &\leq& \cos^2\theta_0\left(2\mu_1+\mu_4\sec^2\theta_0 +\left[(2\mu_6-\mu_5+\mu_2)+(\mu_5-\mu_2)\tan^2\theta_0\right]\right)\non\\
 &\leq& -\frac{(2\mu_6-\mu_5+\mu_2)}{2}\cos\theta_0     \non\\
 &<&0.
  \eea
 Thus, we obtain the following plane wave solutions $(d,v,\bar{\gamma}, \bar{p})$
  \bea
 d&=&0, \non\\
 v&=&(n-\nu\sin\theta_0)e^{\sqrt{-1}m\nu\cdot x- \frac{m^2g(\theta_0)}{2}t},\non\\
  \bar\gamma & =& Ce^{\sqrt{-1}m\nu\cdot x- \frac{m^2g(\theta_0)}{2}t}, \non\\
\bar{p}&=& De^{\sqrt{-1}m\nu\cdot x- \frac{m^2g(\theta_0)}{2}t},\non
 \eea
 which are unstable since $g(\theta_0)<0$. Here, we note that  $\nu-n\sin\theta_0\neq 0$ due to the fact $\theta_0\in (0, \frac{\pi}{2})$.
The proof is complete.\qed
\end{proof}
\br
Theorem \ref{proposition on currie} indicates that for the nematic
liquid crystal flow, if Parodi's relation \eqref{lam2} does not hold, the original Ericksen--Leslie system \eqref{the most primitive equ 1}--\eqref{the most
primitive equ 3} system can be \textit{(linearly) unstable}.\qed
\er

\section{Appendices}
\setcounter{equation}{0} In this section we provide some detailed
computations used in the previous sections.

\subsection{Least action principle}\label{cLAP}
The action functional takes the form
 \bea \mathbb{A}(x)
 =\int_0^T\int_{\Omega_0}
 \left[\frac12|x_t(X,t)|^2-\left(\frac12|\mathbb{F}^{-T}\nabla_X
 \mathbb{E}d_0(X)|^2+\mathcal{F}(\mathbb{E}d_0(X))\right)\right]J \, dX dt,\non
 \eea
 where $\Omega_0=Q$ is the original domain occupied by the material,
 $\mathbb{E}$ is the deformation tensor satisfying \eqref{mathcal E equ 1} and the Jacobian $J={\rm det}
 \mathbb{F}=1$. The above expression includes all the kinematic transport property
of the molecular director $d$. With different kinematic transport
relations, we will obtain different action functionals, even though
the energies may have the same expression in the Eulerian
coordinate.

We take any one-parameter family of volume preserving flow map
$$x^\epsilon(X,t) \quad \text{with} \ \ \ x^0=x,\ \  \left.\frac{d
x^\epsilon}{d\epsilon}\right|_{\epsilon=0}=y$$ and the
volume-preserving constraint $\nabla_x \cdot y=0$ (or
$J^\epsilon={\rm det}
 \mathbb{F}^\epsilon=1$). Applying the least action
principle, we have
$$\delta_x \mathbb{A}=\left.\frac{d\mathbb{A}(x^\epsilon)}{d\epsilon}\right|_{\epsilon=0}=0$$
such that
 \bea
0&=&\int_0^T\int_{\Omega_0} x_t\cdot y_t dXdt\non\\
&&\ -\int_0^T\int_{\Omega_0} \left(\mathbb{F}^{-T}\nabla_X
 \mathbb{E}d_0\right):\left[\left.\frac{d}{d\epsilon}\right|_{\epsilon=0}\left(\nabla_{x^\epsilon}
 d(x^\epsilon,t)\right)\right]dXdt\non\\
 &&-\int_0^T\int_{\Omega_0}f(\mathbb{E}d_0)\cdot\left(\Big.
 \frac{d\mathbb{E}^\epsilon}{d\epsilon}\Big|_{\epsilon=0} d_0\right)\
 dXdt\non\\
 &:=&  I_1+I_2+I_3,
 \label{variation of total energy}
 \eea
 where $\mathbb{E}^\epsilon=\mathbb{E}(x^\epsilon(X,t),t)$.
 Pushing forward to the Eulerian
coordinate, we have
  \bea I_1&=&-\int_{0}^T\int_{\Omega_0} x_{tt} \cdot y dXdt
  =-\int_{0}^T\int_{\Omega_t} \dot{v}\cdot y dxdt\non\\
  & = & -\int_{0}^T\int_{\Omega_t} (v_t+v\cdot\nabla v)\cdot y dxdt,
\label{variation of velocity}
 \eea
 where $\Omega_t$ is the domain occupied by the material at time
 $t$.

From the definition of $\mathbb{E}^\epsilon$, we have
 \be
 \left.\frac{d\mathbb{E}^\epsilon}{d\epsilon}\right|_{\epsilon=0} d_0
 =\left(\frac12(\nabla y-\nabla^{T}y)-\frac{\lambda_2}{2\lambda_1}(\nabla
 y+\nabla^{T}y)\right)\mathbb{E}d_0,
 \ee
  which implies that
   \bea
I_2 &=& -\int_0^T\int_{\Omega_0} \left(\mathbb{F}^{-T}\nabla_X
 \mathbb{E}d_0\right):
 \left(\left.\frac{d(\mathbb{F^\epsilon})^{-T}}{d\epsilon}\right|_{\epsilon=0}\nabla_X
 \mathbb{E}d_0\right)dXdt\non\\
 && -\int_0^T\int_{\Omega_0} \left(\mathbb{F}^{-T}\nabla_X
 \mathbb{E}d_0\right):\left[\mathbb{F}^{-T}\nabla_X
 \Big(\left.\frac{d\mathbb{E}^\epsilon}{d\epsilon}\right|_{\epsilon=0}d_0\Big)\right]dXdt\non\\
&=&  -\int_0^T\int_{\Omega_t} \nabla
 d : \left(-\nabla^T y \nabla d\right)dxdt \non\\
&&  -\int_0^T\int_{\Omega_t} \nabla
 d : \nabla \left[\Big(\frac{\nabla y-\nabla^{T}y}{2}-\frac{\lambda_2}{\lambda_1}\frac{\nabla
 y+\nabla^{T}y}{2}\Big)d\right] dxdt \non\\
 &=& -\int_0^T\int_{\Omega_t} \big[\nabla\cdot(\nabla d \odot\nabla
 d)\big]\cdot y dxdt \non\\
 && \ \  + \frac{1}{2}\Big(1-\frac{\lambda_2}{\lambda_1}\Big)\int_0^T
\int_{\Omega_t} \big[\nabla\cdot (\Delta d\otimes d)\big]\cdot y dxdt\non\\
&& \ \ - \frac{1}{2}\Big(1+\frac{\lambda_2}{\lambda_1}\Big)\int_0^T
\int_{\Omega_t} \big[\nabla\cdot ( d\otimes \Delta d)\big]\cdot y
dxdt, \label{PK}
 \eea
 and
 \bea  I_3&=&-\int_0^T\int_{\Omega_0} f(d) \cdot\left[\Big(\frac12(\nabla y-\nabla^{T}y)-\frac{\lambda_2}{2\lambda_1}(\nabla
 y+\nabla^{T}y)\Big)d\right] dXdt\non\\
 &=& \int_0^T\int_{\Omega_t}\left[ -\frac{1}{2}\Big(1-\frac{\lambda_2}{\lambda_1}\Big)\nabla\cdot(f(d)\otimes
d)\right]\cdot y \ dxdt\non\\
 & & +\int_0^T\int_{\Omega_t}\left[\frac{1}{2}\Big(1+\frac{\lambda_2}{\lambda_1}\Big)\nabla\cdot(d\otimes f(d))
\right]\cdot y \ dxdt. \label{expansion of variation of F(d)}
 \eea
 Inserting  \eqref{variation of velocity}, \eqref{PK} and \eqref{expansion of variation of F(d)}
 into \eqref{variation of total energy}, we arrive at
 \be
 \int_0^T\int_{\Omega_t} \big[v_t+v\cdot\nabla v+\nabla\cdot(\nabla d \odot\nabla
 d)-\nabla \cdot \tilde{\sigma}\big] \cdot y dxdt=0,\label{weak LAP}
 \ee
 where
 \be
 \tilde{\sigma}=-\frac{1}{2}\Big(1-\frac{\lambda_2}{\lambda_1}\Big) (\Delta d-f(d))\otimes d
 + \frac{1}{2}\Big(1+\frac{\lambda_2}{\lambda_1}\Big)  d\otimes (\Delta
 d-f(d)).\label{tildesigma}
 \ee
 Since $y$ is an arbitrary divergence free vector field,
 we formally derive the momentum equation
(Hamiltonian/conservative part) after integration by parts
 \be v_t+v\cdot\nabla
v=-\nabla P-\nabla \cdot (\nabla d\odot\nabla
d)+\nabla\cdot\tilde{\sigma}, \label{momentum equation from
consevative force}
 \ee
 where the pressure $P$ serves as a Lagrangian multiplier for the
 incompressibility of the fluid.

\subsection{Maximum dissipation principle}\label{cMDP}
Using the maximum dissipation principle \cite{O31,O31-2,O53}, we perform a variation on the
dissipation functional (half of the total rate of energy dissipation $\mathcal{D}$ \eqref{DIS2}) with respect
to the velocity $v$ in Eulerian  coordinates.
 If
$\delta_v\big(\frac12\mathcal{D}\big)$ is set to zero, we will get a
weak variational form of the dissipative force balance law
equivalent to conservation of momentum. Let $v^{\epsilon}=v+\epsilon
u$, where $u$ is an arbitrary regular function with $\nabla\cdot
u=0$. Then we have
 \bea
0&=&\delta_v\Big(\frac12\mathcal{D}\Big)= \frac12\frac{d\mathcal{D}(v^\epsilon)}{d\epsilon}\Big|_{\epsilon=0} \non\\
&=&\frac{\mu_4}{2}\int_Q \nabla v : \nabla u dx + \mu_1\int_Q
d_kA_{kp}d_p
d_i\frac{\nabla_iu_j+\nabla_ju_i}{2}d_jdx \non\\
&&-\lambda_1\int_Q\Big(d_t+v\cdot \nabla
d-\Omega\,d+\frac{\lambda_2}{\lambda_1}A\,d\Big)\non\\
&&\quad\quad  \quad \cdot \Big( u\cdot\nabla d -\frac{\nabla
u-\nabla^{T}u}{2}d+\frac{\lambda_2}{\lambda_1}\frac{\nabla
u+\nabla^{T}u}{2}d \Big)dx  \non\\
&&+\Big(\mu_5+\mu_6+\frac{(\lambda_2)^2}{\lambda_1}\Big)\int_Q
A_{ij}d_j\frac{\nabla_iu_k+\nabla_ku_i}{2}d_kdx\non\\
&:=& I_1+I_2+I_3+I_4.\non
 \eea
Using integration by parts, we get
 \bea
I_1&=&-\frac{\mu_4}{2}(\Delta v, u),
 \non\\
I_2 &=& -\mu_1\Big( \nabla\cdot\big[d^T A d(d\otimes d)\big], u\Big), \non \\
I_4
&=&-\frac12\Big(\mu_5+\mu_6+\frac{(\lambda_2)^2}{\lambda_1}\Big)\int_Q
\Big(u_k\nabla_i(d_kA_{ij}d_j)+u_i\nabla_k(A_{ij}d_jd_k)
     \Big)dx   \non\\
&=&-\frac12\Big(\mu_5+\mu_6+\frac{(\lambda_2)^2}{\lambda_1}\Big)
 \Big[\big( \nabla\cdot(d \otimes Ad)\, ,\,u\big)+\big(\nabla\cdot(Ad
\otimes d)\, ,\,u\big)\Big]. \non
 \eea
Using the transport equation \eqref{transport equ of d derived from
EVA} of $d$ and the incompressibility of $u$, we infer that
 \bea I_3&=&
 \left(\Delta d-f(d),
u\cdot\nabla
d-\frac{1}{2}\Big(1-\frac{\lambda_2}{\lambda_1}\Big)\nabla u d
+\frac{1}{2}\Big(1+\frac{\lambda_2}{\lambda_1}\Big)\nabla^T u d
\right)
   \non\\
&=&
 \left(u,
-\nabla F(d)+\nabla\cdot (\nabla d\odot \nabla
d)-\nabla\frac{|\nabla d|^2}{2} \right)
 \non\\
&&+  \Big(1-\frac{\lambda_2}{\lambda_1}\Big)\Big(u, \nabla\cdot
[(\Delta d-f(d))\otimes d] \Big)\non\\
&&\ \ -\Big(1+\frac{\lambda_2}{\lambda_1}\Big)\Big(u, \nabla\cdot
[d\otimes(\Delta
d-f(d))] \Big) \non\\
 &=& \big(u, \nabla\cdot(\nabla
d\odot \nabla d) \big) -\mu_2\big(u, \nabla\cdot(N\otimes d) \big)
-\mu_3\big(u, \nabla\cdot(d \otimes N) \big) \non\\
&&-\eta_5\big(u, \nabla\cdot(A\,d \otimes d) \big) -\eta_6\big(u,
\nabla\cdot(d \otimes A\,d) \big), \label{Onsager 3}
 \eea
 with the coefficients
 \bea
&& \mu_2=\frac{1}{2}(\lambda_1-\lambda_2), \quad \quad \quad
\mu_3=-\frac{1}{2}(\lambda_1+\lambda_2), \non\\
&&
\eta_5=\frac{1}{2}\left[\lambda_2-\frac{(\lambda_2)^2}{\lambda_1}\right],
\quad
\eta_6=-\frac{1}{2}\left[\lambda_2+\frac{(\lambda_2)^2}{\lambda_1}\right].
\label{meaning of eta 5 and eta 6}
 \eea
It follows from the above calculations that
 \bea
0=\frac12\frac{d\mathcal{D}}{d\epsilon}\Big|_{\epsilon=0} =\big(u,
\nabla\cdot(\nabla d\odot \nabla d) \big)-(u, \nabla\cdot\sigma).
 \eea
 The stress tensor $\sigma$ is given by
 \bea \sigma &=& \mu_1(d^T A d) d\otimes d+\mu_2N\otimes d
+\mu_3d\otimes N+\mu_4A+\tilde{\mu}_5Ad\otimes
d+\tilde{\mu}_6d\otimes Ad, \non
 \eea
 with constants
 \bea && \mu_2=\frac{1}{2}(\lambda_1-\lambda_2), \quad \quad
\mu_3=-\frac{1}{2}(\lambda_1+\lambda_2), \non\\
&& \tilde{\mu}_5=\frac{1}{2}(\lambda_2+\mu_5+\mu_6), \quad
\tilde{\mu}_6=\frac{1}{2}(-\lambda_2+\mu_5+\mu_6). \non
 \eea
 Since $u$ is an arbitrary function with $\nabla\cdot u=0$, we arrive
at the dissipative force balance equation
 \be 0= -\nabla P -\nabla\cdot(\nabla d\odot \nabla d)+
\nabla\cdot\sigma,
  \label{momentum equation from dissipative energy}
  \ee
  where the pressure $P$ serves as a Lagrangian multiplier for the
 incompressibility of the fluid.


\subsection{Computation on the time derivative of $\mathcal{A}(t)$}\label{cdA}

Using \eqref{e1}--\eqref{e3} and integration by parts, due to the
periodic boundary conditions, we have
\begin{eqnarray}
 && \frac{1}{2}\frac{d}{dt}\mathcal{A}(t)\non\\
&=&-(\Delta v, v_t)+(\Delta
d-f, \Delta d_t-f'(d)d_t)  \non\\
&=&(\Delta v, v\cdot\nabla v)+(\Delta v, \nabla d\Delta
d)-(\nabla\cdot\sigma, \Delta v)+\frac{1}{\lambda_1}\|\nabla(\Delta
d-f)\|^2
 \non\\&&-\big(\Delta d-f, \Delta(v\cdot\nabla d)\big)+\big(\Delta d-f, \Delta(\Omega
d)\big)-\frac{\lambda_2}{\lambda_1}\big(\Delta d-f, \Delta(Ad) \big) \non\\
&&+ \left( \Delta d-f, f'(d)\Big[\frac{1}{\lambda_1}(\Delta
d-f)+v\cdot\nabla d+\Omega d+\frac{\lambda_2}{\lambda_1}Ad
\Big]\right).
 \label{first time expansion of derivative of A}
\end{eqnarray}
First, we expand the third term on the right-hand side of
\eqref{first time expansion of derivative of A}:
 \bea
 && -(\nabla\cdot\sigma,
\Delta v)\non\\
&=&-\int_{Q}\nabla_j\sigma_{ij} \nabla_l\nabla_l v_{i}dx=-\int_{Q}\nabla_l\sigma_{ij} \nabla_l\nabla_j v_{i}dx \non\\
&=&-\mu_1\int_{Q}\nabla_l(d_kd_pA_{kp}d_id_j) \nabla_l\nabla_j
v_idx-\mu_4\int_{Q}\nabla_lA_{ij}
\nabla_l\nabla_j v_idx      \non\\
&&-\mu_2\int_{Q}\nabla_l(d_jN_i) \nabla_l\nabla_j
v_idx-\mu_3\int_{Q}\nabla_l(d_iN_j)\nabla_l\nabla_j
v_idx \non\\
&&-\mu_5\int_{Q}\nabla_l(d_jd_kA_{ki})\nabla_l\nabla_j
v_idx-\mu_6\int_{Q}\nabla_l(d_id_kA_{kj}) \nabla_l\nabla_j v_idx.
\non
 \eea
Using integration by parts and the fact that $\Omega$ is antisymmetric, we have
 \bea &&-\mu_1\int_{Q}\nabla_l(d_kd_pA_{kp}d_id_j)\nabla_l\nabla_j
v_i dx \non\\
&=& \mu_1\int_{Q} (d_kd_pA_{kp}d_id_j)\nabla_l\nabla_l(A_{ij}+\Omega_{ij}) dx
\non\\
&=& \mu_1\int_{Q} (d_kd_pA_{kp}d_id_j)\nabla_l\nabla_lA_{ij} dx\non\\
&=&-\mu_1\int_{Q}(d_kd_p\nabla_lA_{kp})^2dx
-\mu_1\int_{Q}A_{kp}\nabla_l(d_kd_p)d_id_j
\nabla_lA_{ij}dx
\non\\
&&-\mu_1\int_{Q}A_{kp}d_kd_p\nabla_l(d_id_j)\nabla_lA_{ij}dx.
\label{expansion of mu 1}
 \eea
By the incompressibility condition $\nabla \cdot v=0$, we see that
 \bea
-\mu_4\int_{Q}\nabla_l(A_{ij}) \nabla_l\nabla_j
v_idx&=&-\mu_4\int_{Q}\nabla_j(A_{ij}) \nabla_l\nabla_l v_idx\non\\
  &=& -\frac{\mu_4}{2}\|\Delta v\|^2.
\label{expansion of mu 4}
  \eea
Meanwhile,
 \bea &&-\mu_2\int_{Q}\nabla_l(d_jN_i) \nabla_l\nabla_j
v_idx-\mu_3\int_{Q}\nabla_l(d_iN_j)\nabla_l\nabla_j
v_idx \non\\
&=&\mu_2\int_{Q}d_jN_i
\Delta(A_{ij}+\Omega_{ij})dx+\mu_3\int_{Q}d_iN_j
\Delta(A_{ij}+\Omega_{ij})dx \non\\
&=&(\mu_2+\mu_3)\int_Q d_jN_i\Delta A_{ij}dx+(\mu_2-\mu_3)(N,\Delta
\Omega \,d),
 \label{expansion of mu 2 and 3}
 \eea
 and
 \bea &&-\mu_5\int_{Q}\nabla_l(d_jd_kA_{ki})
\nabla_l\nabla_j v_idx-\mu_6\int_{Q}\nabla_l(d_id_kA_{kj})
\nabla_l\nabla_j v_i dx   \non\\
&=&\mu_5\int_{Q}d_jd_kA_{ki}
\Delta(A_{ij}+\Omega_{ij})dx+\mu_6\int_{Q}d_jd_kA_{ki}
\Delta(A_{ij}-\Omega_{ij})dx   \non\\
&=&(\mu_5+\mu_6)\int_{Q}d_jd_kA_{ki} \Delta
A_{ij}dx+(\mu_5-\mu_6)\int_{Q}d_jd_kA_{ki}
\Delta\Omega_{ij}dx  \non\\
&=&-(\mu_5+\mu_6)\int_{Q}d_jd_k\nabla_lA_{ki}
\nabla_lA_{ij}dx-(\mu_5+\mu_6)\int_{Q}\nabla_ld_jd_kA_{ki}
\nabla_lA_{ij}dx   \non\\
&&-(\mu_5+\mu_6)\int_{Q}d_j\nabla_ld_kA_{ki}
\nabla_lA_{ij}dx+(\mu_5-\mu_6)\big(Ad, \Delta\Omega d \big)    \non\\
&=&-(\mu_5+\mu_6)\int_{Q}|d_j\nabla_lA_{ji}|^2dx-(\mu_5+\mu_6)\int_{Q}\nabla_ld_jd_kA_{ki}
\nabla_lA_{ij}dx
\non\\
&&-(\mu_5+\mu_6)\int_{Q}d_j\nabla_ld_kA_{ki}
\nabla_lA_{ij}dx+(\mu_5-\mu_6)\big(Ad, \Delta\Omega d \big).
\label{expansion of mu 5 and mu 6}
 \eea
 Next, using the $d$ equation \eqref{e3}, we have
\bea && (\Delta d-f, \Delta(\Omega\,d)) \non\\
 &=&(\Delta d-f, \Delta\Omega\,
d)+2\int_Q(\Delta d_i-f_i) \nabla_l\Omega_{ij}\nabla_l d_j
dx\non\\
&& \ \ +(\Delta d-f,
\Omega\Delta d)   \non\\
&=&-\lambda_1\int_{Q}d_jN_i\Delta\Omega_{ij}\, dx-\lambda_2\big(Ad,
\Delta\Omega d \big)\non\\
&&\ \  +2\int_Q(\Delta d_i-f_i)
\nabla_l\Omega_{ij}\nabla_l d_j dx+(\Delta d-f, \Omega\Delta d) \non\\
&=& -\lambda_1(N, \Delta\Omega\,d)-\lambda_2\big(Ad, \Delta\Omega d
\big) -\int_Q\nabla_l (\Delta d_i-f_i) \Omega_{ij}\nabla_l d_j dx
 \non\\
 &&\ \ +\int_Q(\Delta d_i-f_i) \nabla_l\Omega_{ij}\nabla_l d_j dx,
 \label{expansion of laplace omega d}
 \eea
 \bea
 && -\frac{\lambda_2}{\lambda_1}\big(\Delta d-f, \Delta(Ad) \big)\non\\
&=&\lambda_2 \big(N, \Delta(Ad)
\big)+\frac{(\lambda_2)^2}{\lambda_1}\big(Ad,
\Delta(Ad) \big)   \non\\
&=&\lambda_2\int_Q N_i\Delta A_{ij}d_jdx+2\lambda_2\int_Q
N_i\nabla_l A_{ij}\nabla_l d_j dx+\lambda_2(N, A\Delta d)\non\\
&& -\frac{(\lambda_2)^2}{\lambda_1}\int_{Q}|\nabla_l
(A_{ij}d_j)|^2dx. \label{expansion of laplace A d}
 \eea

 \textit{Special Cancellations.} (i) Due to  \eqref{lama1}, the first term on the right-hand side of
\eqref{expansion of laplace omega d} cancels with the second term of
the right-hand side of \eqref{expansion of mu 2 and 3} and the
second term on the right-hand side of \eqref{expansion of laplace
omega d} cancels with the fourth term of the right-hand side of
\eqref{expansion of mu 5 and mu 6}. (ii) By Parodi's relation
\eqref{lam2}, the first term of the right-hand side of
\eqref{expansion of laplace A d} cancels with the first term of the
right-hand side of \eqref{expansion of mu 2 and 3}.

Concerning the fifth term on the right-hand side of \eqref{first
time expansion of derivative of A}, using the incompressibility of
$v$, the fact $\nabla d \cdot f(d)  =\nabla \mathcal{F}(d)$ and integration by
parts, we obtain
\bea
&&-(\Delta d-f, \Delta((v\cdot\nabla) d)) \non\\
&=&-(\Delta d-f, \Delta v\cdot\nabla d)-2\int_Q(\Delta
d_i-f_i)\nabla_l v_j\nabla_l\nabla_j d_idx-(\Delta d-f,
v\cdot\nabla\Delta d)\non\\
&=&-(\Delta v, \nabla d\Delta d)+2\int_Q\nabla_j (\Delta
d_i-f_i)\nabla_l v_j\nabla_ld_idx-(\Delta d-f, v\cdot\nabla f).\non
 \eea
Hence,
  \bea &&-\big(\Delta d-f, \Delta(v\cdot\nabla
d)\big)+(\Delta v, \nabla d\Delta d)\non\\
&&\;\; +\left(\Delta d-f, f'(d)\Big[\frac{1}{\lambda_1}(\Delta
d-f)+v\cdot\nabla d-\Omega\, d+\frac{\lambda_2}{\lambda_1}A\,d \Big]\right) \non\\
&=&\frac{1}{\lambda_1}\int_{Q}f'(d)|\Delta d-f|^2dx- \left(\Delta
d-f, f'(d)\Big(\Omega\, d-\frac{\lambda_2}{\lambda_1}A\,d\Big)
\right)\non\\
&& \;\;+2\int_Q\nabla_j (\Delta d_i-f_i)\nabla_l
v_j\nabla_ld_idx-(\Delta d-f, v\cdot\nabla f). \label{expansion of
the final three terms of A(t)}
    \eea
Collecting the above calculations together, we conclude that
\eqref{second time expansion of derivative of A(t)} holds.


%

\section*{Acknowledgements}
H. Wu was partially supported by NSF of
China 11001058, Specialized Research Fund for
the Doctoral Program of Higher Education and "Chen Guang" project supported by Shanghai
Municipal Education Commission and Shanghai Education Development
Foundation. C. Liu and X. Xu were partially supported by NSF grants
DMS-0707594 and DMS-1109107. This project began during a long term visit of X. Xu and C. Liu to
IMA of University of Minnesota, whose hospitality is gratefully
acknowledged. They would like to thank Professors C. Calderer, C.
Doering, D. Kinderlehrer, C.-M. Li, F.-H. Lin, E. Titi and C.-Y.
Wang for many helpful discussions. 

\end{document}